\title{\LARGE \bf
Mixed-Triggered Reliable Control for Singular Networked Cascade Control Systems with Randomly Occurring Cyber Attack
}
\author{Sathishkumar Murugesan and Yen-Chen~Liu
\thanks{This work was supported in part by the Ministry of Science and Technology, Taiwan, under grants MOST 107-2811-E-006-537 and MOST 108-2636-E-006-007.}
\thanks{S. Murugesan and Y.-C. Liu are with the Department of Mechanical Engineering, National Cheng Kung University, Tainan 70101, Taiwan,
e-mail:\texttt{\{sathishmaths21@gmail.com, yliu@mail.ncku.edu.tw\}.}}}
\newtheorem{theorem}{Theorem}[section]
\newtheorem{lemma}[theorem]{Lemma}
\newtheorem{definition}[theorem]{Definition}
\begin{document}

\maketitle
\thispagestyle{empty}
\pagestyle{empty}

\begin{abstract}
In this paper, the issue of mixed-triggered reliable dissipative control is investigated for singular networked cascade control systems (NCCSs) with actuator saturation and randomly occurring cyber attacks. { In order to utilize the limited communication resources effectively, a more general mixed-triggered scheme is established which includes both schemes namely time-triggered and event-triggered in a single framework. In particular, two main factors are incorporated to the proposed singular NCCS model namely, actuator saturation and randomly occurring cyber attack, which is an important role to damage the overall network security.
By employing Lyapunov-Krasovskii stability theory, a new set of sufficient conditions in terms of linear matrix inequalities (LMIs) is derived to guarantee the singular NCCSs to be admissible and strictly $(\mathcal{Q}, \mathcal{S}, \mathcal{R})$-dissipative.
Subsequently, a power plant boiler-turbine system based on a numerical example is provided to demonstrate the effectiveness of the proposed control scheme.}
\end{abstract}
\begin{keywords}
Mixed-triggering scheme, singular networked cascade control systems, cyber attack, actuator saturation.
\end{keywords}
\section{INTRODUCTION}\label{sec:intro}
In more recent years, cascade control systems have grabbed remarkable attention from researchers due to its numerous applications in many technical areas, e.g. power plants, neural networks, chemical reactors, and networked control systems.
To mention specifically, this control algorithm utilizes a pair of control loops, where the second loop (secondary loop) is embedded with the first loop (primary loop).
The first loop is responsible to the system stability, and the second loop is able to quickly eliminate the disturbances~\cite{NCCS, NCCS3}.
Nowadays much significance is given to real-time networked control systems due to the unique features of reduced weight and power requirements, low cost, simple installation and maintenance, higher flexibility, and easy reconfigurability.
Therefore, it is vital to analysis the combination of the cascade control systems and networked control systems, known as networked cascade control systems.

Singular systems, also known as differential algebraic or descriptor systems, have been vastly discussed by researchers in the control society.
The emerging area attracts significant attentions because they arise naturally in various scientific fields, such as chemical process, economic systems, power systems, electrical networks, and mechanical systems~\cite{ss1,ss2}.
Furthermore, despite of several advantages of singular NCCSs, it inevitably leads to a few major issues, such as time-varying delay, unpredictable actuator fault and exogenous disturbances which formulates the analysis and design of singular NCCSs very complicated.

It is essential to improve the communication constraints so as to transmit the large control information by the limited bandwidth communication network.
To ensure the performance of NCSs in practice and to overcome the drawback of inadequate bandwidth resources, event-triggering communication schemes have been predominantly considered.
In contrast to the conventional communication of time-triggering, the former can be opted, as it facilitates to release the sample data packets into the network more efficiently.
Recently, various works on event-triggering communication have been discussed~\cite{tig1,tig2,event}.
In the practical point of view, it is necessary and important to consider the combination of both time- and event-triggering, which is known as a mixed triggered scheme~\cite{base}.

It is executed by making use of random switch connecting time- and event-triggered. Accordingly the implementation of mixed triggered scheme results in enhancement of the system performance and that minimize the network transmission at a time.
Moreover, the reliable control design has gained significance since it has the capability of maintaining system stability and holds performance of the required systems even though the actuator faults are present~\cite{reliable1,reliable2,relconf}.
Furthermore, the most of the real-time actuators can distribute input signals of bounded amplitude only because of physical restrictions which may lead to the actuator saturation phenomenon~\cite{base, sat1}.
Recently, more focus has been paid to cyber attack in networked control systems due to its vigorously opening-up property of data-transmission channels through an unsecured communication network medium which are vulnerable to be disrupted by ambiguity.
Additionally, it can be classified as deception attacks~\cite{datt}, replay attacks~\cite{ratt} and denial of service attacks~\cite{DoS1}.
For example, the controller design problem is addressed for networked systems with stochastic cyber attack based on a mixed-triggering scheme~\cite{cattack}.

%

Up to now, only a few works have been done previously on control synthesis and stability analysis of singular NCCSs.
Some vital glitches in controller schemes such as stochastic cyber attack, actuator faults, actuator saturation and disturbances have not so far been considered for singular NCCSs.
The aforementioned works motivate and draw our research interest towards establishing a mixed-triggered control design with respect to unexpected actuator faults and saturation for singular NCCSs under external disturbances and randomly occurring cyber attack.
Some notable features of this paper are encapsulated with unique aspects as given below:
\begin{itemize}
\item[(i)] A novel primary and secondary mixed-triggered reliable dissipative controller design is developed for the singular NCCSs with actuator saturation, unexpected actuator faults and randomly occurring cyber attack.
\item[(ii)] In the proposed strictly $(\mathcal{Q}, \mathcal{S}, \mathcal{R})$ dissipativity results consolidates the results of $H_\infty$, passivity and mixed $H_\infty$ and passivity, which makes the considered issue more general one.
\item[(iii)] Based on the Lyapunov stability theorem, a novel mixed-triggered reliable dissipative control is developed, which ensures the admissibility of the proposed system.
\end{itemize}

\section{PRELIMINARIES AND PROBLEM FORMULATION}\label{p1-sec2}

In this paper, singular NCCSs with actuator saturations and stochastic cyber attacks are considered with mixed-triggered reliable dissipative control.
As shown in Fig.~\ref{p1-fig1}, the structure of the cascade control system contains two loops. The inner loop is made up of secondary plant $\Sigma_2$, secondary sensor $S_2$, secondary controller.
The outer loop is composed of the primary plant $\Sigma_1$, primary sensor $S_1$, primary controller, and the actuator $A$.
A network is assumed to connect the sensor and primary controller in the cascade control system, which constitutes the singular NCCS studied in this paper.

\begin{figure}[!htb]
	\begin{minipage}[b]{.70\textwidth}
		\includegraphics[width=8.8cm]{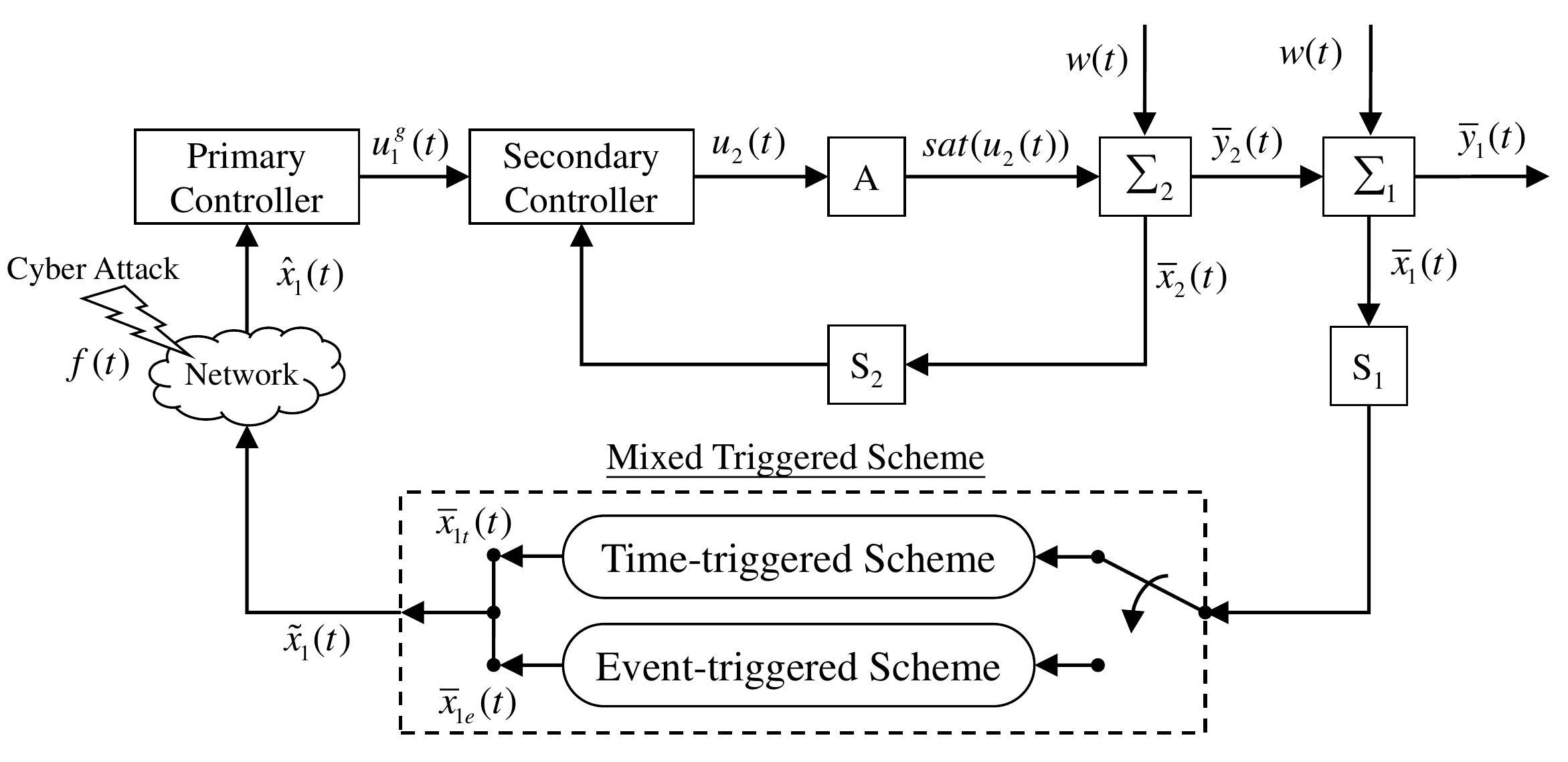}
	\end{minipage}
	\caption{Configuration diagram of the proposed reliable mixed-triggered control for singular NCCSs}\label{p1-fig1}
\end{figure}

\subsection{Cascade Control System}

We consider that the primary plant $\Sigma_1$ of the NCCSs is described by
\begin{eqnarray}
{\mbox{$\Sigma_1$:}} \left\lbrace
\begin{array}{l}\label{p1-eq1}
\dot{\bar{x}}_{1}=A_{1}\bar{x}_{1}+B_{1}\bar{y}_{2},\\
\bar{y}_{1}=C_{1}\bar{x}_{1}+D_1w,
\end{array}
\right.
\end{eqnarray}
where $x_{1}$ is the state vector of $\Sigma_1$, $y_1$ is the output vector, $A_1$, $B_1$, $C_1$ and $D_1$ are known constant matrices with appropriate dimensions, $y_2$ is the output of the $\Sigma_2$.
In the proposed system, the secondary plant $\Sigma_2$ is modeled by using a class of singular systems with state delay that
\begin{eqnarray}
&&\hspace{-.3in}
{\mbox{$\Sigma_2$:}}\left\lbrace
\begin{array}{l}\label{p1-eq2}
\mathcal{E}\dot{\bar{x}}_{2}=A_{2}\bar{x}_{2}+A_{3}\bar{x}_{2}(t-\theta(t))
+B_{2}sat(u_{2})+B_{3}w,\\
\bar{y}_{2}=C_{2}\bar{x}_{2}+D_2w,\\
\bar{x}_2=\phi,\quad t\in[-\bar{\theta}, 0],
\end{array}
\right.
\end{eqnarray}
where $\bar{x}_2$ and $u_2$ are the state vector and control input vector of the $\Sigma_2$, $w$ is the exogenous disturbances which belongs to $\mathcal{L}_{2}[0,\infty)$, $\bar{y}_2$ is the output of the secondary plant, and $A_2$, $A_3$, $B_2$, $B_3$, $C_2$ and $D_2$ are known constant matrices with appropriate dimensions.
Additionally, the matrix $\mathcal{E}$ may be singular and it is assumed that $rank(\mathcal{E})=r\leq n$, and the function $\phi$ is the initial condition
defined on $[\bar{\theta}, 0]$.
The term $\theta(t)$ denotes the time-varying delay and satisfies $0\leq\theta(t)\leq \bar{\theta}$ with $\dot{\theta}(t)\leq\lambda<1,$  where $\lambda$ is a positive integer representing maximum time delay.
Moreover, $sat(u_{2})$ denotes the saturation function of the actuator in $\Sigma_2$, which is defined as $sat(u_2)=
\begin{bmatrix}
sat(u_2^1) & sat(u_2^2) & \cdots & sat(u_2^m)
\end{bmatrix}^T\in\mathbb{R}^m$
 where
\begin{eqnarray}
sat(u_2^i)=
\left\lbrace
\begin{array}{cl}\label{p1-eq3}
\Xi_i, & u_2^i>\Xi_i \\
u_2^i, &-\Xi_i\leq u_2^i\leq \Xi_i,\ i=1, 2, \cdots, m \\
-\Xi_i, & u_2^i< -\Xi_i
\end{array}
\right.
\end{eqnarray}
with $\Xi_i$ denotes the known upper limits of actuator saturation constraints in $\Sigma_2$.
The output of the saturation function $sat(u_2)$ can be split into two sections which includes linear and nonlinear.
Therefore, the following saturation model will be considered~\cite{base} in this paper
\begin{eqnarray}\label{p1-eq4}
sat(u_2)=u_2-\Psi(u_2).
\end{eqnarray}
Furthermore, the dead-zone nonlinearity function $\Psi(u_2)$ satisfies the following condition that there exists $\epsilon\in(0,1)$ with $\epsilon=\max\{\epsilon_1, \epsilon_2, \cdots, \epsilon_m\}$ such that
\begin{eqnarray}\label{p1-eq5}
\epsilon u_2^Tu_2\geq \Psi^T(u_2)\Psi(u_2).
\end{eqnarray}
In order to construct the suitable controller for the primary and secondary plant, we design the following state feedback controller as
\begin{eqnarray}
\left\lbrace
\begin{array}{lll}\label{p1-eq6}
u_1^g=\mathbb{G}u_1=\mathbb{G}\mathcal{K}_1\hat{x}_1,\\
u_2=u_1^g+\mathcal{K}_2\bar{x}_2,
\end{array}
\right.
\end{eqnarray}
where $\mathcal{K}_1$ and $\mathcal{K}_2$ are the state feedback control gain matrices of primary and secondary controllers, respectively.
In~\eqref{p1-eq6}, $\hat{x}_1$ is the actual state input of primary controller.
The reliable control law $u_{1}^{g}$ is defined as $u_{1}^{g}= \mathbb{G}u_{1}$,
where $\mathbb{G}$ is the actuator fault matrix and it is possible to defined in the following matrix form that $\mathbb{G}=\mbox{diag}\{g_{1},g_{2},\dots,g_{m}\}$ with $0\leq\underline{g}_{k}\leq g_{k}\leq\overline{g}_{k}\leq 1,$ $k=1, 2, \dots, m$.
When $g_k=0$, the $k^{th}$ actuator fully fails, whereas $g_k=1$ means that the $k^{th}$ actuator functions normally.
For the sake of simplicity, we define $\bar{\mathbb{G}}=\mbox{diag}\{\bar{g}_{1},\bar{g}_{2},\dots,\bar{g}_{m}\},$
$\underline{\mathbb{G}}=\mbox{diag}\{\underline{g}_{1},\underline{g}_{2},\dots,\underline{g}_{m}\}$, $\mathbb{G}_0=\frac{\bar{\mathbb{G}}+\underline{\mathbb{G}}}{2}$, $\mathbb{G}_1=\frac{\bar{\mathbb{G}}-\underline{\mathbb{G}}}{2}$.
Thus, the fault matrix $\mathbb{G}$ can be expressed in the following form
\begin{align}\label{p1-eq7}
\mathbb{G}=\mathbb{G}_0+\mathbb{G}_1\bar{\Sigma},
\end{align}
{where $\bar{\Sigma}=\mbox{diag}\{l_1,l_2,\dots, l_m\}\in \mathbb{R}^{m\times m}$ with $-1 \leq l_k\leq 1$. }
\subsection{Problem formulation}
It should be noted that a more general mixed-triggered control
scheme is established to reduce the burden of network bandwidth, which is modeled
in a probabilistic way by utilizing random variable to express the switch between time and event-triggered schemes.
When the signal is passed through the time-triggered scheme, the sampled measurements are periodic and it will be transmitted in time.
Furthermore, the sequence of transmitting instant is denoted by $t_kh \ (k=1, 2, \cdots)$, where $h$ is a sampling period, $t_k \ (k=1, 2, \cdots)$ is a sequence set of positive integers, namely, $\{t_1, t_2, \cdots\}=\{1, 2, \cdots\}$. Roughly speaking, when the latest transmitting instant is $t_kh$ and the following transmitting instant is
$t_{k+1}h=t_kh+h$.
Therefore, $\eta_{t_k}$ denotes the network-induced
time-delay of sensor measurement sampled at the instant $t_kh$.

The singular NCCSs in Fig. \ref{p1-fig1} modeled with $\zeta(t)=t-t_kh$ and the sensor measurement of $\Sigma_1$ as
\begin{eqnarray}\label{p1-eq8}
\bar{x}_{1t}=\bar{x}_1(t_kh)=\bar{x}_1(t-\zeta(t)), \ t\in [t_kh+\zeta_{t_k}, t_{k+1}h+\zeta_{t_{k+1}}),
\end{eqnarray}
where $\zeta(t)\in[0, \zeta_2]$, $\zeta_2$ is the upper bound of network-induced delay.
Suppose the random switch signal passes through the channel of event-triggered scheme, the periodically sampled measurements will be transmitted to the communication network only possible when they violate the condition of triggering.
Thus, the sequence of transmitting instant can be described in the event-triggering condition that
\begin{eqnarray}\label{p1-eq9}
e_k^TWe_k\leq \mu \bar{x}_1^T(t_kh+lh)W\bar{x}_1(t_kh+lh),
\end{eqnarray}
where $e_k(t)=\bar{x}_1(t_kh)-\bar{x}_1(t_kh+lh)$, and $W>0$ is a matrix, $l=1, 2, \cdots$ and scalar $\mu\in[0,1)$.
The latest transmitted signal is represented by $x_1(t_kh)$ at the latest triggering time $t_kh$. Whether the latest sampled signals $x(t_kh+lh)$ are possible to delivered or not depends on the condition~\eqref{p1-eq9}.
For improvement, the time intervals can be converted into many subintervals, which can be written in the following form that $[t_kh+\zeta_{t_k}, t_{k+1}h+\zeta_{t_k+1})=\bigcup_{l=0}^{d}[t_kh+lh+\zeta_{t_{k}+l},t_kh+lh+h+\zeta_{t_{k}+1+l}]$, where $l=1, \cdots, d$, $d=t_{k+1}-t_k-1$.
By letting $d(t)=t-t_kh-lh$, it is easy to obtain the limit of $d(t)$ as $0<\zeta_{{t_k}+1}\leq d(t)\leq d_2$, in which $d_2 \triangleq  h+\zeta_{{t_k}+l+1}$. Therefore, the sensor measurement can be modeled as
\begin{eqnarray}\label{p1-eq10}
&&\hspace{-.25in} \bar{x}_{1e}=\bar{x}_1(t_kh)=\bar{x}_1(t-d(t))+e_k, t\in [t_kh+\zeta_{t_k}, t_{k+1}h+\zeta_{t_k+1}). \nonumber
\end{eqnarray}

In order to frame the mixed-triggered scheme, we introduce the stochastic variable $\alpha(k)$ that satisfies the Bernoulli distributed white sequences with
$\mathbb{E}\{\alpha(t)\}=\bar{\alpha}, \
\mathbb{E}\{\alpha(t)-\bar{\alpha}\}=0, \
\mbox{and} \
\mathbb{E}\{(\alpha(t)-\bar{\alpha})^2\}=\sigma^2$.
Based on the above discussion, the modified mixed-triggered control scheme is expressed by
\begin{eqnarray}\label{p1-eq11}
\tilde{x}_1=\alpha(t)\bar{x}_{1t}+(1-\alpha(t))\bar{x}_{1e}.
\end{eqnarray}
It is noted that the cyber attacks are established by a random manner.
The nonlinear function $f(\bar{x}_1)$ is introduced to express the phenomena of cyber attack and its time delay is assumed to satisfy $0<\tau(k)\leq \tau_2$.
Moreover, the variable $\beta(k)$ is mutually independent and Bernoulii distributed white sequence, which is utilized to govern the randomly occurring cyber attacks.
\begin{eqnarray}
&&\beta(t)=\left\lbrace
\begin{array}{lll}
1, \quad {\mbox{cyber attacks are executed}}\\
0, \quad {\mbox{transmission is normal}}
\end{array}.
\right.
\end{eqnarray}
Subsequently, with the mixed-triggered scheme and randomly occurring cyber attack, the primary control input can be represented as
\begin{eqnarray}\label{p1-eq12}
\hat{x}_1=\beta(t)f(\bar{x}_1(t-\tau(t)))+(1-\beta(t))\tilde{x}_1,
\end{eqnarray}
where
$
\Pr\{\beta(t)=1\}=\mathbb{E}\{\beta(t)\}=\bar{\beta}, \
\Pr\{\beta(t)=0\}=1-\mathbb{E}\{\beta(t)\}=1-\bar{\beta},\
\mathbb{E}\{\beta(t)-\bar{\beta}\}=0, \
\mbox{and} \
\mathbb{E}\{(\beta(t)-\bar{\beta})^2\}=\delta^2.
$
Hence, the primary controller of~\eqref{p1-eq6} can be rewritten as
\begin{eqnarray}\label{p1-eq13}
&&\hspace{-.28in} u_1^g=\mathbb{G}u_1=(1-\beta(t))(1-\alpha(t))\Big[\mathbb{G}\mathcal{K}_1\bar{x}_1(t-d(t))+\mathbb{G}\mathcal{K}_1e_k\Big]\nonumber\\
&&\hspace{-.28in}+\beta(t)\mathbb{G}\mathcal{K}_1f(\bar{x}_1(t-\tau(t)))+(1-\beta(t))\alpha(t)\mathbb{G}\mathcal{K}_1\bar{x}_1(t-\zeta(t)). \nonumber
\end{eqnarray}
Therefore, from~\eqref{p1-eq1},~\eqref{p1-eq2},~\eqref{p1-eq4} and $u_1^g$, the closed-loop control system can be described by
\begin{eqnarray}
\left\lbrace
\begin{array}{llll}\label{p1-eq14}
\hspace{0.25cm}\dot{\bar{x}}_{1}=A_{1}\bar{x}_{1}+B_{1}C_{2}\bar{x}_{2}+B_{1}D_2w,\\
\mathcal{E}\dot{\bar{x}}_{2}=A_{2}\bar{x}_{2}+A_{3}\bar{x}_{2}(t-\theta(t))\\
\hspace{0.95cm}+(1-\beta(t))\alpha(t)B_2\mathbb{G}\mathcal{K}_1\bar{x}_1(t-\zeta(t))\\
\hspace{0.95cm}+(1-\beta(t))(1-\alpha(t))B_2\mathbb{G}\mathcal{K}_1\bar{x}_1(t-d(t))\\
\hspace{0.95cm}+(1-\beta(t))(1-\alpha(t))B_2\mathbb{G}\mathcal{K}_1e_k\\
\hspace{0.95cm}+\beta(t)B_2\mathbb{G}\mathcal{K}_1f(\bar{x}_1(t-\tau(t)))\\
\hspace{0.95cm}+B_2\mathcal{K}_2\bar{x}_2-B_2\Psi(u_2)+B_3w,\\
\hspace{0.26cm}\bar{y}_{1}=C_{1}\bar{x}_{1}+D_1w.
\end{array}
\right.
\end{eqnarray}
Here, we recall definitions and lemmas are more essential to get the required results.
\begin{definition}\label{p1-def1}\cite{NCCS}
The pair $(\mathcal{E},A_2)$ is said to be regular if $\det(s\mathcal{E}-A_2)$ is not identically zero and
 the pair $(\mathcal{E},A_2)$ is said to be impulse free if $\deg(\det(s\mathcal{E}-A_2))=$rank$(\mathcal{E}).$  Further,
the unforced singular system is said to be regular and impulse free, if the pair $(\mathcal{E},A_2)$ is regular and impulse free.
\end{definition}
\begin{definition}\cite{prof1}
Given scalar $\gamma>0$ matrices $\mathcal{Q}$, $\mathcal{R}$ and $\mathcal{S}$ with $\mathcal{Q}$ and $\mathcal{R}$ real symmetric, the closed-loop systems \eqref{p1-eq14} is strictly $(\mathcal{Q}, \mathcal{S}, \mathcal{R})$ dissipative if for $t>0$ under zero initial state, the following condition is satisfied:
\begin{align*}
<\bar{y}_1,\mathcal{Q}\bar{y}_1>_t&+2<\bar{y}_1, \mathcal{S} w>_t+<w, \mathcal{R}w>_t
\geq \gamma<w,w>_t.
\end{align*}
Without loss of generality, we assume that the matrix $\mathcal{Q}\leq 0$, $\bar{\mathcal{Q}}=\sqrt{-\mathcal{Q}}$ and $<u,v>_t=\int_{0}^{t}u^Tvdt$.
\end{definition}
\begin{lemma}\label{p1-schur}\cite{NCCS}
Given constant matrices $\Omega_1,\ \Omega_2$ and  $\Omega_3$ with appropriate dimensions, where
$\Omega_1=\Omega_1^T<0$ and $\Omega_2=\Omega_2^T>0$ then
$\Omega_1+\Omega_3^T\Omega_2^{-1}\Omega_3<0$ if and only if $
\left[\begin{array}{ccc}
\Omega_1 & \Omega_3^T\\
* & -\Omega_2
\end{array}\right]<0.$
\end{lemma}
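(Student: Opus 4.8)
The plan is to establish both implications at once by an explicit congruence transformation that block-diagonalizes the partitioned matrix on the right-hand side; the whole argument rests on the fact that congruence by an invertible matrix preserves negative definiteness. Denote $M \triangleq \left[\begin{array}{cc} \Omega_1 & \Omega_3^T \\ \Omega_3 & -\Omega_2 \end{array}\right]$, where the $(2,1)$ block is read off as $\Omega_3$ in accordance with the symmetric-block notation ``$*$'' used in the statement.

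First I would introduce the nonsingular matrix $P = \left[\begin{array}{cc} I & 0 \\ \Omega_2^{-1}\Omega_3 & I \end{array}\right]$, which is well-defined because $\Omega_2 = \Omega_2^T > 0$ ensures that $\Omega_2^{-1}$ exists, and whose inverse is obtained simply by negating the off-diagonal block. Then I would compute $P^T M P$ block by block: a direct multiplication gives $M P = \left[\begin{array}{cc} \Omega_1 + \Omega_3^T \Omega_2^{-1} \Omega_3 & \Omega_3^T \\ 0 & -\Omega_2 \end{array}\right]$, and left-multiplication by $P^T$ then cancels the surviving off-diagonal term, producing the block-diagonal matrix $P^T M P = \left[\begin{array}{cc} \Omega_1 + \Omega_3^T \Omega_2^{-1} \Omega_3 & 0 \\ 0 & -\Omega_2 \end{array}\right]$.

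From this the equivalence is immediate. Since $P$ is invertible, $M < 0$ holds if and only if $P^T M P < 0$, and a symmetric block-diagonal matrix is negative definite exactly when each of its diagonal blocks is. The block $-\Omega_2$ is automatically negative definite under the hypothesis $\Omega_2 > 0$, so the condition $M < 0$ collapses to $\Omega_1 + \Omega_3^T \Omega_2^{-1} \Omega_3 < 0$, which is precisely the asserted equivalence. (The assumption $\Omega_1 = \Omega_1^T < 0$ serves only to make $M$ symmetric and plays no further role in the argument.)

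I do not expect a genuine obstacle here, since this is the classical Schur complement lemma; the only points requiring care are the bookkeeping in the block product $P^T M P$ and an explicit appeal to the invariance of definiteness under congruence (equivalently, Sylvester's law of inertia).
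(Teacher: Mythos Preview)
Your argument is correct: the congruence $P^T M P$ with $P=\left[\begin{smallmatrix} I & 0 \\ \Omega_2^{-1}\Omega_3 & I \end{smallmatrix}\right]$ block-diagonalizes $M$ exactly as you claim, and the equivalence then follows from invariance of negative definiteness under congruence together with $-\Omega_2<0$. The paper itself does not supply a proof of this lemma---it is quoted as a standard result from \cite{NCCS}---so there is no in-paper argument to compare against; your derivation is the classical Schur complement proof and is entirely adequate.
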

\begin{lemma} \cite{reliable2} \label{uncertain}
Let $M$, $N$ and $F(t)$ be real constant matrices of appropriate dimensions with $F(t)$ satisfying $F^{T}(t)F(t)\leq I$, then there exists a scalar $\epsilon >0$, such that $MF(t)N+(MF(t)N)^{T}\leq \epsilon^{-1}MM^{T}+\epsilon N^{T}N$.
\end{lemma}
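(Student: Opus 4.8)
The plan is to derive the bound from the elementary matrix form of Young's inequality together with the spectral hypothesis $F^{T}(t)F(t)\le I$. The starting point I would use is that for arbitrary real matrices $X$ and $Y$ of compatible dimensions and any scalar $\epsilon>0$, the matrix $(\epsilon^{-1/2}X-\epsilon^{1/2}Y)^{T}(\epsilon^{-1/2}X-\epsilon^{1/2}Y)$ is positive semidefinite, and expanding it gives
\begin{align*}
X^{T}Y+Y^{T}X\le \epsilon^{-1}X^{T}X+\epsilon Y^{T}Y .
\end{align*}

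First I would cast the left-hand side of the claim into this template by writing $MF(t)N=(M^{T})^{T}(F(t)N)$ and applying the above with $X=M^{T}$ and $Y=F(t)N$, obtaining
\begin{align*}
MF(t)N+(MF(t)N)^{T}\le \epsilon^{-1}MM^{T}+\epsilon\,N^{T}F^{T}(t)F(t)N .
\end{align*}
Then I would absorb the middle factor: since $I-F^{T}(t)F(t)\ge 0$ by hypothesis, the congruence transformation $N^{T}\bigl(I-F^{T}(t)F(t)\bigr)N\ge 0$ yields $N^{T}F^{T}(t)F(t)N\le N^{T}N$, and substituting this back (using $\epsilon>0$) gives exactly $MF(t)N+(MF(t)N)^{T}\le \epsilon^{-1}MM^{T}+\epsilon N^{T}N$, so in particular such an $\epsilon$ exists.

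Since the result is classical, I do not expect a genuine obstacle; the only step needing care is the last one, because $F(t)$ need not be square or of full rank, so the inequality $N^{T}F^{T}(t)F(t)N\le N^{T}N$ must be read at the level of quadratic forms, i.e. $z^{T}N^{T}F^{T}(t)F(t)Nz\le z^{T}N^{T}Nz$ for every vector $z$, rather than by formally cancelling $F(t)$.
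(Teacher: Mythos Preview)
Your argument is correct and is the standard derivation of this bound. Note, however, that the paper does not supply its own proof of this lemma: it is stated as a known result with a citation to \cite{reliable2}, so there is no in-paper proof to compare against. Your route via the matrix Young inequality $(\epsilon^{-1/2}X-\epsilon^{1/2}Y)^{T}(\epsilon^{-1/2}X-\epsilon^{1/2}Y)\ge 0$ with $X=M^{T}$, $Y=F(t)N$, followed by the congruence $N^{T}\bigl(I-F^{T}(t)F(t)\bigr)N\ge 0$, is exactly the classical justification, and your remark that the last step should be read at the level of quadratic forms is the right way to handle possibly nonsquare $F(t)$.
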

\begin{lemma}\label{p1-writ1}\cite{base}
Consider a given matrix $R=R^T>0$. Then, for all continuously differentiable function $\dot{x}$ in $[a,b]\rightarrow \mathbb{R}^n$, the following inequality holds:
\begin{eqnarray*}
-\int_{a}^{b}\dot{x}^T(s)R\dot{x}(s)ds\leq -\frac{1}{b-a}
\begin{bmatrix}
\Pi_1^T \\ \Pi_2^T
\end{bmatrix}^T
\begin{bmatrix}
R & 0 \\ \ast & 3R
\end{bmatrix}
\begin{bmatrix}
\Pi_1^T \\ \Pi_2^T
\end{bmatrix},
\end{eqnarray*}
where $\Pi_1=x(b)-x(a)$ and $\Pi_2=x(b)+x(a)-\frac{2}{b-a}\int_{a}^{b}x(s)ds$.
\end{lemma}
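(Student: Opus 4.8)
The plan is to obtain the inequality as a direct consequence of the nonnegativity of a quadratic form built from the orthogonal-projection error of $\dot{x}$ onto the span of the first two Legendre polynomials on $[a,b]$. First I would introduce the shifted Legendre polynomials $p_0(u)\equiv 1$ and $p_1(u)=\frac{2u-a-b}{b-a}$, and record the elementary identities $\int_a^b p_0(u)p_1(u)\,du=0$ and $\int_a^b p_1^2(u)\,du=\frac{b-a}{3}$, together with the boundary values $p_1(a)=-1$, $p_1(b)=1$ and the derivative $p_1'(u)=\frac{2}{b-a}$. Writing $P=\int_a^b\dot{x}(s)\,ds$ and $Q=\int_a^b p_1(s)\dot{x}(s)\,ds$, I would then define the residual
\[
z(u)=\dot{x}(u)-\frac{1}{b-a}P-\frac{3\,p_1(u)}{b-a}\,Q,
\]
which is precisely $\dot{x}(u)$ minus its $L^2[a,b]$-projection onto $\mathrm{span}\{p_0,p_1\}$.

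Since $R=R^T>0$, one has $\int_a^b z^T(u)Rz(u)\,du\ge 0$. The second step is to expand this integral: the cross terms between $\dot{x}$ and the projection part reproduce exactly the squared-norm contributions of the latter, while the orthogonality $\int_a^b p_1(u)\,du=0$ and the normalization $\int_a^b p_1^2(u)\,du=\frac{b-a}{3}$ annihilate all genuinely mixed terms, collapsing the estimate to
\[
\int_a^b\dot{x}^T(s)R\dot{x}(s)\,ds\ \ge\ \frac{1}{b-a}\,P^TRP+\frac{3}{b-a}\,Q^TRQ.
\]

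The third step is to identify $P$ and $Q$ with $\Pi_1$ and $\Pi_2$. The first is immediate from the fundamental theorem of calculus, $P=x(b)-x(a)=\Pi_1$. For the second, integration by parts gives $Q=\bigl[p_1(s)x(s)\bigr]_a^b-\int_a^b p_1'(s)x(s)\,ds=x(b)+x(a)-\frac{2}{b-a}\int_a^b x(s)\,ds=\Pi_2$. Substituting these and multiplying through by $-1$ produces exactly the claimed bound with the block matrix $\mathrm{diag}(R,3R)$. I do not expect a genuine obstacle here: the argument rests only on the implication $R>0\Rightarrow\int_a^b z^TRz\,du\ge0$ and on the Legendre orthogonality, so the only delicate points are the boundary bookkeeping in the integration by parts and keeping the weighting matrix $R$ on the correct side of every product. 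As a consistency check, discarding the $p_1$-term in $z$ reduces the estimate to the classical Jensen inequality $-\int_a^b\dot{x}^TR\dot{x}\,ds\le-\frac{1}{b-a}\Pi_1^TR\Pi_1$, so the extra $3R$ block is precisely the Wirtinger-type refinement that sharpens it.
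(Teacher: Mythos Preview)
Your argument is correct. The paper does not actually prove this lemma; it is quoted from \cite{base} (and ultimately from Seuret--Gouaisbaut \cite{writ}) without proof, so there is no in-paper derivation to compare against. Your Legendre-projection approach is exactly the standard way this Wirtinger-type refinement of Jensen's inequality is obtained: the residual $z=\dot{x}-\frac{1}{b-a}P-\frac{3p_1}{b-a}Q$ has nonnegative $R$-energy, orthogonality of $p_0$ and $p_1$ kills the mixed $P$--$Q$ term, and the normalizations $\int_a^b 1\,du=b-a$ and $\int_a^b p_1^2\,du=\frac{b-a}{3}$ give the weights $1$ and $3$. The integration-by-parts identification $Q=\Pi_2$ is also correct as written. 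Nothing further is needed.
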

\begin{lemma}\label{p1-writ2}\cite{base}
Suppose that there exists a matrix $M\in \mathbb{R}^{n\times n}$ satisfying
$\begin{bmatrix}
R & M^T \\ \ast & R
\end{bmatrix}\geq 0$
for given symmetric positive definite matrices $\mathbb{R}^{n\times n}$. Then, for any scalar $\theta \in (0, 1)$, the following inequality holds:
\begin{eqnarray*}
\begin{bmatrix}
\frac{1}{\theta}R & 0 \\ 0 & \frac{1}{1-\theta}R
\end{bmatrix}
\geq
\begin{bmatrix}
R & M^T \\ \ast & R
\end{bmatrix}.
\end{eqnarray*}
\end{lemma}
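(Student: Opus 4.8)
The plan is to reduce the claimed matrix inequality to the hypothesis by two applications of the Schur complement. Subtracting the right-hand side from the left-hand side, the assertion is equivalent to the positive semidefiniteness of
\[
\begin{bmatrix}
\frac{1-\theta}{\theta}R & -M^T \\ \ast & \frac{\theta}{1-\theta}R
\end{bmatrix}\geq 0 .
\]
Since $\theta\in(0,1)$ and $R=R^T>0$, the $(1,1)$-block $\frac{1-\theta}{\theta}R$ is positive definite, so by the (nonstrict form of the) Schur complement in Lemma~\ref{p1-schur} this is equivalent to
\[
\frac{\theta}{1-\theta}R-M\Big(\frac{1-\theta}{\theta}R\Big)^{-1}M^T
=\frac{\theta}{1-\theta}\big(R-MR^{-1}M^T\big)\geq 0 ,
\]
that is, to $R-MR^{-1}M^T\geq 0$. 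On the other hand, applying the Schur complement to the hypothesis $\left[\begin{smallmatrix}R & M^T\\ \ast & R\end{smallmatrix}\right]\geq 0$ (again legitimate because $R>0$) yields precisely $R-MR^{-1}M^T\geq 0$. Combining these equivalences gives the desired inequality.

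To sidestep the nonstrict Schur step, one can argue directly with quadratic forms. Denote by $M_0$ the hypothesis matrix $\left[\begin{smallmatrix}R & M^T\\ \ast & R\end{smallmatrix}\right]$. For arbitrary $u,v\in\mathbb{R}^n$ and $\xi=\mathrm{col}\{u,v\}$, expanding $\xi^T\big(\mathrm{diag}\{\tfrac1\theta R,\tfrac1{1-\theta}R\}-M_0\big)\xi$ collapses it to $\tfrac{1-\theta}{\theta}u^TRu+\tfrac{\theta}{1-\theta}v^TRv-2u^TM^Tv$; this equals $\tilde\xi^T M_0\tilde\xi$ with $\tilde\xi=\mathrm{col}\big\{\sqrt{(1-\theta)/\theta}\,u,\,-\sqrt{\theta/(1-\theta)}\,v\big\}$, because $\sqrt{(1-\theta)/\theta}\cdot\sqrt{\theta/(1-\theta)}=1$. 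Since $M_0\geq 0$ by hypothesis and $u,v$ are arbitrary, the matrix inequality follows.

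There is no genuine obstacle in this proof: it is an elementary congruence/Schur-complement manipulation. The only points that need a moment's care are that the scalars $\tfrac{1-\theta}{\theta}$ and $\tfrac{\theta}{1-\theta}$ are strictly positive for $\theta\in(0,1)$ --- which makes the Schur-complement pivot admissible and lets the common factor $\tfrac{\theta}{1-\theta}$ cancel --- and that $R^{-1}$ is well defined because $R$ is positive definite.
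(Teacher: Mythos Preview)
Your proof is correct. Both routes you give --- the Schur-complement reduction and the direct quadratic-form computation --- are valid; in particular the identity
\[
\frac{1-\theta}{\theta}u^TRu+\frac{\theta}{1-\theta}v^TRv-2u^TM^Tv
=\tilde\xi^T\begin{bmatrix}R & M^T\\ M & R\end{bmatrix}\tilde\xi
\]
with $\tilde\xi=\mathrm{col}\{\sqrt{(1-\theta)/\theta}\,u,\,-\sqrt{\theta/(1-\theta)}\,v\}$ checks out term by term, and the nonstrict Schur complement with positive-definite pivot $R$ is legitimate.

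There is nothing to compare against here: the paper states Lemma~\ref{p1-writ2} as a quotation from \cite{base} and supplies no proof of its own. Your argument is essentially the standard proof of the reciprocally convex combination lemma, so nothing is missing.
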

\begin{lemma}\label{jnl}\cite{writ}
For ny constant matrix $W_1>0$, any scalars $a$ and $b$ with $a<b$ and a vector function $x: [a,b]\rightarrow\mathbb{R}^n$, the following integral inequality holds:
\begin{eqnarray*}
\bigg[\int_a^b x(s)ds\bigg]^TW_1\bigg[\int_a^b
x(s)ds\bigg]\leq(b-a)\int_a^b x^T(s)W_1x(s)ds.
\end{eqnarray*}
\end{lemma}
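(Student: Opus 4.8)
The plan is to reduce the stated matrix inequality to a scalar Cauchy--Schwarz estimate by exploiting $W_1>0$. First I would factor $W_1=W_1^{1/2}W_1^{1/2}$ with $W_1^{1/2}=(W_1^{1/2})^{T}>0$ and set $z(s):=W_1^{1/2}x(s)$, so that $x^{T}(s)W_1x(s)=z^{T}(s)z(s)$ and, by linearity of the integral, $\bigl[\int_a^b x(s)\,ds\bigr]^{T}W_1\bigl[\int_a^b x(s)\,ds\bigr]=\bigl\|\int_a^b z(s)\,ds\bigr\|^{2}$. The claim then becomes $\bigl\|\int_a^b z(s)\,ds\bigr\|^{2}\le (b-a)\int_a^b z^{T}(s)z(s)\,ds$, which I would obtain coordinate by coordinate from the classical inequality $\bigl(\int_a^b z_i(s)\,ds\bigr)^{2}\le (b-a)\int_a^b z_i^{2}(s)\,ds$ (Cauchy--Schwarz with constant weight $1$ on $[a,b]$) and then sum over $i=1,\dots,n$.

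Alternatively, and more in keeping with the LMI manipulations used later in the paper, I would argue by a double Schur-complement step. For each fixed $s\in[a,b]$ the block matrix $\left[\begin{smallmatrix}W_1 & W_1x(s)\\ x^{T}(s)W_1 & x^{T}(s)W_1x(s)\end{smallmatrix}\right]$ is positive semidefinite, since its Schur complement relative to the $(1,1)$-block $W_1>0$ equals $x^{T}(s)W_1x(s)-x^{T}(s)W_1W_1^{-1}W_1x(s)=0$. Integrating this matrix inequality over $[a,b]$ preserves positive semidefiniteness and yields $\left[\begin{smallmatrix}(b-a)W_1 & W_1\int_a^b x(s)\,ds\\ \int_a^b x^{T}(s)W_1\,ds & \int_a^b x^{T}(s)W_1x(s)\,ds\end{smallmatrix}\right]\ge 0$; a second use of Lemma~\ref{p1-schur}, now with respect to the positive definite pivot $(b-a)W_1$, gives $\int_a^b x^{T}(s)W_1x(s)\,ds\ \ge\ \tfrac{1}{b-a}\bigl[\int_a^b x(s)\,ds\bigr]^{T}W_1\bigl[\int_a^b x(s)\,ds\bigr]$, which is the assertion after multiplying through by $b-a>0$.

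The argument is entirely routine, and no delay, singularity, saturation, or triggering structure enters here: the lemma is a self-contained analytic tool that will later be invoked to bound the integral cross-terms arising when the Lyapunov--Krasovskii functional is differentiated. The only points needing care are (i) checking that the pivot blocks in the Schur steps are genuinely positive definite, which holds because $W_1>0$ and $b-a>0$, and (ii) justifying that entrywise integration of a pointwise matrix inequality $M(s)\ge 0$ gives $\int_a^b M(s)\,ds\ge 0$, which follows since $v^{T}\bigl(\int_a^b M(s)\,ds\bigr)v=\int_a^b v^{T}M(s)v\,ds\ge 0$ for every vector $v$. I expect the main obstacle to be purely presentational --- choosing between the two routes and keeping the block-matrix bookkeeping clean --- rather than mathematical.
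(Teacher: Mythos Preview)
Your proof is correct: both the square-root/Cauchy--Schwarz route and the Schur-complement route are valid and standard ways to obtain this Jensen-type integral inequality. Note, however, that the paper does not supply its own proof of this lemma; it is simply quoted from \cite{writ} as a known tool, so there is no argument in the paper to compare against.
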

\begin{lemma}\label{wrtl}\cite{writ}
For any matrix $R \in \mathbb{R}^{n \times m} $, $R=R^T > 0$, any differentiable function $\omega$ in $[a,b]\rightarrow \mathbb{R}^n $ the following inequalities holds:
\begin{align*}
\int^{b}_{a} \dot{\omega}^T(s)R\dot{\omega}(s)ds
&\geq \frac{\varsigma^T\bigg[W^T_1RW_1 + \pi^2 W^T_2RW_2 \bigg]\varsigma}{b-a},
\end{align*}
where $\varsigma = [\omega^T(b)\ \omega^T(a)\ \int^b_a \omega^T(s)/(b-a)ds]^T, W_1=[I -I \ 0], W_2=[I/2 \ I/2 \ -I]$.
\end{lemma}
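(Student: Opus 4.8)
The plan is to reduce the claimed inequality to the classical scalar Wirtinger inequality combined with Jensen's inequality (Lemma~\ref{jnl}), by means of an affine decomposition of $\omega$ on $[a,b]$ that isolates exactly the two quadratic terms appearing on the right-hand side.

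First I would write $\omega(s)=\omega(a)+\tfrac{s-a}{b-a}\bigl(\omega(b)-\omega(a)\bigr)+y(s)$, which defines a remainder $y$ with $y(a)=y(b)=0$ and $\dot y(s)=\dot\omega(s)-\tfrac{1}{b-a}\bigl(\omega(b)-\omega(a)\bigr)$. Substituting this into $\int_a^b\dot\omega^T(s)R\dot\omega(s)\,ds$ and expanding the quadratic form, the cross term equals $\tfrac{2}{b-a}\bigl(\omega(b)-\omega(a)\bigr)^TR\int_a^b\dot y(s)\,ds=\tfrac{2}{b-a}\bigl(\omega(b)-\omega(a)\bigr)^TR\bigl(y(b)-y(a)\bigr)=0$. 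Hence
\[
\int_a^b\dot\omega^TR\dot\omega\,ds=\int_a^b\dot y^TR\dot y\,ds+\frac{1}{b-a}\bigl(\omega(b)-\omega(a)\bigr)^TR\bigl(\omega(b)-\omega(a)\bigr),
\]
and since $W_1\varsigma=\omega(b)-\omega(a)$ the last term is precisely $\tfrac{1}{b-a}\varsigma^TW_1^TRW_1\varsigma$.

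Next I would bound $\int_a^b\dot y^TR\dot y\,ds$ from below. Factoring $R=R^{1/2}R^{1/2}$ and applying the scalar Wirtinger inequality componentwise to $R^{1/2}y$, which still vanishes at both endpoints, gives $\int_a^b\dot y^TR\dot y\,ds\ge\tfrac{\pi^2}{(b-a)^2}\int_a^b y^TRy\,ds$. Then Lemma~\ref{jnl} (applied with the matrix $R$) yields $\int_a^b y^TRy\,ds\ge\tfrac{1}{b-a}\bigl(\int_a^b y\,ds\bigr)^TR\bigl(\int_a^b y\,ds\bigr)$. A direct computation using $\int_a^b\tfrac{s-a}{b-a}\,ds=\tfrac{b-a}{2}$ shows $\int_a^b y(s)\,ds=\int_a^b\omega(s)\,ds-(b-a)\tfrac{\omega(a)+\omega(b)}{2}=-(b-a)\,W_2\varsigma$, so chaining the two estimates gives $\int_a^b\dot y^TR\dot y\,ds\ge\tfrac{\pi^2}{b-a}\varsigma^TW_2^TRW_2\varsigma$. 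Adding the $W_1$ term then yields the asserted inequality.

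The main obstacle is the sharp-constant scalar Wirtinger inequality $\int_a^b\dot z^2\,ds\ge\tfrac{\pi^2}{(b-a)^2}\int_a^b z^2\,ds$ for $z(a)=z(b)=0$, which is the only genuinely analytic ingredient: it follows from Parseval's identity applied to the Fourier sine expansion of $z$ on $[a,b]$ (the constant $\pi^2$ being contributed by the lowest mode), or equivalently from the Euler--Lagrange characterization of the extremal. Everything else is the algebraic bookkeeping of the decomposition and an invocation of Jensen's inequality, already available as Lemma~\ref{jnl}.
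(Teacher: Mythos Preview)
Your argument is correct: the affine decomposition isolates the Jensen term $W_1^TRW_1$ exactly, the cross term vanishes because $y(a)=y(b)=0$, and the chain Wirtinger $\to$ Jensen on the remainder $y$ yields the $\pi^2 W_2^TRW_2$ term with the right constant. The identification $\int_a^b y(s)\,ds=-(b-a)W_2\varsigma$ is also correct.

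There is, however, nothing to compare against in this paper: Lemma~\ref{wrtl} is quoted from \cite{writ} without proof, so the paper supplies no argument of its own. Your proof is in fact the standard route used in the cited source (Seuret and Gouaisbaut), namely splitting off the affine part and invoking the scalar Wirtinger inequality on the zero-boundary remainder, so methodologically you are aligned with the original reference even though the present paper is silent.
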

\section{Main results}\label{p1-sec3}
In this section, we obtain the sufficient conditions for stability and stabilization of the singular NCCSs \eqref{p1-eq14} will be derived by using the Lyapunov-Krasovskii functional method and mixed-triggered reliable controller. First, we develop the conditions which ensures that the singular NCCSs \eqref{p1-eq14} is admissible in the absence of external disturbances. Next, this results can be easily extended to obtain mixed-triggered reliable $(\mathcal{Q}, \mathcal{S}, \mathcal{R})$ dissipative controller that guarantees the stability of the system with known and unknown actuator failures by using LMI technique.
\begin{theorem}\label{p1-thm1}
For given positive scalars $\bar{\alpha}$, $\bar{\beta}$, $\lambda$, $\gamma$, $\epsilon_1$, $\epsilon_2$, $\epsilon_3$, $\epsilon_4$ and $\epsilon_f$, the upper bound of time-delays $\zeta_2$, $d_2$, $\tau_2$, $\bar{\theta}$, trigger parameter $\mu$, and given matrices $F$, $\mathcal{Q}=\mathcal{Q}^T$, $\mathcal{R}=\mathcal{R}^T$ and $\mathcal{S}$, the actuator fault matrix $\mathbb{G}$ is known, the singular NCCSs \eqref{p1-eq14} is mean-square asymptotically admissible and strictly dissipative, if there exist symmetric positive definite matrices ${X}_1$, ${X}_2$, $\hat{Q}_{1}$, $\hat{Q}_{2}$, $\hat{Q}_{3}$, $\hat{Q}_{4}$, $\hat{\tilde{Q}}_4$, $\hat{Z}_{1}$, $\hat{Z}_{2}$, $\hat{R}_{1}$, $\hat{R}_{2}$, $\hat{R}_{3}$ $\hat{W}>0$ and matrices $Y_1$, $Y_2$, $\hat{\mathbb{U}}_{i1}$, $\hat{\mathbb{U}}_{i2}$, $\hat{\mathbb{U}}_{i3}$, $\hat{\mathbb{U}}_{i4}$ $(i= 1, 2, 3)$ such that the following LMIs hold
\begin{align}
\label{p1-thm1lmi}X_2^T\mathcal{E}^T=\mathcal{E}X_2&\geq 0,\\
\label{p1-thm1lmi1}\hat{\hat{\Omega}}=
\begin{bmatrix}
[\check{\Omega}]_{22\times 22} & \hat{\hat{\Omega}}_1\\
\ast & \hat{\hat{\Omega}}_2
\end{bmatrix}&<0,\\
\label{p1-thm1lmi2}\begin{bmatrix}
\hat{R}_i & 0 & \hat{\mathbb{U}}_{i1}^T & \hat{\mathbb{U}}_{i3}^T \\
\ast & 3\hat{R}_i & \hat{\mathbb{U}}_{i2}^T  & \hat{\mathbb{U}}_{i4}^T  \\
\ast & \ast &  \hat{R}_i & 0 \\
\ast &  \ast &  \ast &  3\hat{R}_i
\end{bmatrix}&\geq 0, \quad i=1, 2, 3
\end{align}
where $\hat{\hat{\Omega}}$ parameters are given in Appendix~\ref{sec:app:1}.
Furthermore, the desired state feedback reliable
controller gain matrices can be calculated by $\mathcal{K}_1=Y_1X_1^{-1}$ and $\mathcal{K}_2=Y_2X_2^{-1}$.
\end{theorem}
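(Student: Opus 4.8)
The plan is to use the Lyapunov--Krasovskii functional (LKF) method for singular stochastic time-delay systems, followed by a congruence transformation that linearizes the controller-gain products into the LMIs \eqref{p1-thm1lmi}--\eqref{p1-thm1lmi2}. First I would construct $V(t)=V_1(t)+V_2(t)+V_3(t)$, where $V_1(t)=\bar x_1^T(t)P_1\bar x_1(t)+\bar x_2^T(t)\mathcal{E}^T P_2\bar x_2(t)$ with the singular constraint $\mathcal{E}^T P_2=P_2^T\mathcal{E}\geq 0$ (which becomes \eqref{p1-thm1lmi} after inversion), $V_2(t)$ gathers single-integral terms $\int_{t-\theta(t)}^{t}(\cdot)^T Q_i(\cdot)\,ds$ over the delay windows $\theta(t),\zeta(t),d(t),\tau(t)$, and $V_3(t)$ gathers double-integral terms $\int_{-\bar\theta}^{0}\!\int_{t+s}^{t}\dot{\bar x}^T R_i\dot{\bar x}\,du\,ds$ together with the $\zeta_2,d_2,\tau_2$ analogues needed to dominate the delayed-derivative contributions.

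\textbf{Step 2 (infinitesimal operator, expectation, slacks).} Next I would compute the weak infinitesimal operator $\mathcal{L}V(t)$ along \eqref{p1-eq14} and take $\mathbb{E}\{\cdot\}$, using $\mathbb{E}\{\alpha\}=\bar\alpha$, $\mathbb{E}\{\beta\}=\bar\beta$, $\mathbb{E}\{(\alpha-\bar\alpha)^2\}=\sigma^2$, $\mathbb{E}\{(\beta-\bar\beta)^2\}=\delta^2$ and the independence of $\alpha,\beta$, so the first-order stochastic fluctuations cancel and the second-order ones contribute $\sigma^2$- and $\delta^2$-weighted quadratics in the augmented vector $\xi(t)=[\bar x_1^T,\bar x_2^T,\bar x_2^T(t-\theta(t)),\bar x_1^T(t-\zeta(t)),\bar x_1^T(t-d(t)),e_k^T,f^T(\cdot),\Psi^T(u_2),w^T,\dots]^T$. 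The single-integral derivatives are bounded with Lemmas~\ref{jnl} and \ref{wrtl}, and each $-\int\dot{\bar x}^T R_i\dot{\bar x}$ with Lemma~\ref{p1-writ1}; the free matrices $\hat{\mathbb{U}}_{ij}$ are precisely those whose nonnegativity via Lemma~\ref{p1-writ2} yields \eqref{p1-thm1lmi2} on the triggering subintervals (cf.~\eqref{p1-eq10}). Finally I would add zero through the event-triggering inequality \eqref{p1-eq9} (slack $W$) and the dead-zone bound \eqref{p1-eq5} (slack $\epsilon$), and add and subtract the supply rate $<\bar y_1,\mathcal{Q}\bar y_1>+2<\bar y_1,\mathcal{S}w>+<w,\mathcal{R}w>-\gamma<w,w>$ from the strict $(\mathcal{Q},\mathcal{S},\mathcal{R})$-dissipativity definition.

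\textbf{Step 3 (admissibility and LMI reduction).} For $w\equiv 0$ I would first establish that the nominal singular pair is regular and impulse-free in the mean-square sense: choosing nonsingular $\Gamma,\Lambda$ with $\Gamma\mathcal{E}\Lambda=\mathrm{diag}\{I_r,0\}$ and reading off the $(2,2)$-block of the resulting inequality forces the corresponding block of $\Lambda^T A_2^T\Gamma^T(\cdot)$ to be invertible, i.e. Definition~\ref{p1-def1}; mean-square asymptotic stability then follows from $\mathbb{E}\{\mathcal{L}V\}<0$. The collected inequality is a matrix inequality $\Theta<0$ that is bilinear in $(P_1,P_2,\mathcal{K}_1,\mathcal{K}_2)$; congruence-transforming by $\mathrm{diag}\{X_1,X_2,\dots\}$ with $X_1=P_1^{-1}$, $X_2=P_2^{-1}$, setting $Y_1=\mathcal{K}_1X_1$, $Y_2=\mathcal{K}_2X_2$ and renaming $\hat Q_i=X^TQ_iX$, $\hat R_i=X^TR_iX$, $\hat W=X_1^TWX_1$ turns $\mathcal{E}^T P_2=P_2^T\mathcal{E}\geq 0$ into \eqref{p1-thm1lmi}, while a final Schur complement (Lemma~\ref{p1-schur}) on the quadratic-in-$\dot{\bar x}$ part coming from $V_3$ produces the off-diagonal block $\hat{\hat{\Omega}}_1$ and the block-diagonal $\hat{\hat{\Omega}}_2$, yielding \eqref{p1-thm1lmi1}. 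Since $\mathbb{G}$ is known here, no additional bounding (Lemma~\ref{uncertain}) is required.

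\textbf{Main obstacle.} The delicate part is the linearization bookkeeping: the closed-loop terms $(1-\beta)\alpha B_2\mathbb{G}\mathcal{K}_1$, $(1-\beta)(1-\alpha)B_2\mathbb{G}\mathcal{K}_1$ and $\beta B_2\mathbb{G}\mathcal{K}_1$ couple the inner-loop Lyapunov matrix $P_2$ with the outer-loop gain $\mathcal{K}_1$, so the substitution $Y_1=\mathcal{K}_1X_1$ does not by itself linearize $B_2\mathbb{G}\mathcal{K}_1X_1$ unless an auxiliary equality (or a shared block structure tying $X_1$ across the two loops) is imposed and then made compatible with the reciprocal-convexity partition of the $R_i$-blocks over the triggering subintervals. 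Carrying all the $\sigma^2,\delta^2$ cross-terms consistently through this transformation, and assembling the large block matrix $\hat{\hat{\Omega}}$ without sign errors, is where the bulk of the full proof's effort goes.
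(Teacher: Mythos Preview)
Your outline matches the paper's proof in every structural respect: the same LKF (the paper splits your $V_3$ into a $V_3$ for the $R_i$-integrals on $\bar x_1$ and a $V_4$ for the $Z_1,Z_2$-integrals on $\bar x_2$), the same application of Lemmas~\ref{p1-writ1}--\ref{wrtl} to the corresponding integral terms, the same slack injections from \eqref{p1-eq9}, \eqref{p1-eq5} and the attack bound, the same block-partition argument for regularity/impulse-freeness, and the same congruence by $\mathrm{diag}\{X_1,\dots,X_1,X_2,\dots,X_2,I,\dots\}$ with $X_i=P_i^{-1}$ followed by a Schur complement. Your stated main obstacle is overstated, however: every cross-loop coupling $P_2B_2\mathbb{G}\mathcal{K}_1$ sits in an off-diagonal entry whose row index lies in an $X_2$-block and whose column index lies in an $X_1$-block, so the congruence gives $X_2P_2\,B_2\mathbb{G}\mathcal{K}_1\,X_1=B_2\mathbb{G}Y_1$ directly---no auxiliary equality or shared block structure tying $X_1$ to $X_2$ is required, and the paper imposes none.
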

\begin{proof}
The proof of Theorem~\ref{p1-thm1} is referred to Appendix~\ref{sec:app:2}.
\end{proof}

Next, we present the actuator fault matrix $\mathbb{G}$ is unknown and satisfying  \eqref{p1-eq7}, the mixed-triggered reliable controller is designed through the upcoming theorem by utilizing the sufficient conditions in Theorem \ref{p1-thm1}.

\begin{theorem}\label{p1-thm2}
For given positive scalars $\bar{\alpha}$, $\bar{\beta}$, $\lambda$, $\gamma$, $\epsilon_1$, $\epsilon_2$, $\epsilon_3$, $\epsilon_4$ and $\epsilon_f$, the upper bound of time-delays $\zeta_2$, $d_2$, $\tau_2$, $\bar{\theta}$, trigger parameter $\mu$, the actuator fault matrix $\mathbb{G}$ is unknown and matrices $F$, $\mathcal{Q}=\mathcal{Q}^T$, $\mathcal{R}=\mathcal{R}^T$ and $\mathcal{S}$, the singular NCCSs \eqref{p1-eq14} is mean-square asymptotically admissible and strictly $(\mathcal{Q}, \mathcal{S}, \mathcal{R})$ dissipative, if there exist symmetric positive definite matrices ${X}_1$, ${X}_2$, $\hat{Q}_{1}$, $\hat{Q}_{2}$, $\hat{Q}_{3}$, $\hat{Q}_{4}$, $\hat{\tilde{Q}}_4$, $\hat{Z}_{1}$, $\hat{Z}_{2}$, $\hat{R}_{1}$, $\hat{R}_{2}$, $\hat{R}_{3}$ $\hat{W}>0$ and matrices $Y_1$, $Y_2$, $\hat{\mathbb{U}}_{i1}$, $\hat{\mathbb{U}}_{i2}$, $\hat{\mathbb{U}}_{i3}$, $\hat{\mathbb{U}}_{i4}$ $(i= 1, 2, 3)$  such that the following LMI together with \eqref{p1-thm1lmi} and \eqref{p1-thm1lmi2} holds
\begin{align}\label{p1-thm2lmi1}
\hat{\Lambda}=
\begin{bmatrix}
\Theta & \tilde{B} \\
\ast & \tilde{\epsilon}
\end{bmatrix}&<0,
\end{align}
where the parameters in the matrix $\hat{\Lambda}$ are listed in Appendix~\ref{sec:app:3}.
Moreover, the desired primary and secondary controller gain matrices can be obtain as $\mathcal{K}_1=Y_1X_1^{-1}$ and $\mathcal{K}_2=Y_2X_2^{-1}$, respectively.
\end{theorem}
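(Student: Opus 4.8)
The plan is to reduce Theorem~\ref{p1-thm2} to Theorem~\ref{p1-thm1} by treating the unknown fault matrix $\mathbb{G}$ as a norm-bounded uncertainty. First I would revisit the LMI \eqref{p1-thm1lmi1} derived in the proof of Theorem~\ref{p1-thm1} and isolate every block in which $\mathbb{G}$ occurs. In the closed-loop system \eqref{p1-eq14} the matrix $\mathbb{G}$ enters only through the products $B_2\mathbb{G}\mathcal{K}_1$ multiplying $\bar x_1(t-\zeta(t))$, $\bar x_1(t-d(t))$, $e_k$ and $f(\bar x_1(t-\tau(t)))$; after the linearizing congruence transformation of Theorem~\ref{p1-thm1} (with $Y_1=\mathcal{K}_1X_1$) these become affine expressions of the form $(\cdot)\mathbb{G}Y_1$. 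Substituting the decomposition \eqref{p1-eq7}, $\mathbb{G}=\mathbb{G}_0+\mathbb{G}_1\bar\Sigma$, thus splits $\hat{\hat\Omega}$ as $\Theta+\Delta(\bar\Sigma)$, where $\Theta$ is precisely $\hat{\hat\Omega}$ with $\mathbb{G}$ replaced by the nominal value $\mathbb{G}_0$ (the block listed in Appendix~\ref{sec:app:3}) and $\Delta(\bar\Sigma)$ collects the remaining $\bar\Sigma$-dependent entries.

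Next I would write $\Delta(\bar\Sigma)$ in the canonical form $\Delta(\bar\Sigma)=\mathcal{M}\bar\Sigma\mathcal{N}+(\mathcal{M}\bar\Sigma\mathcal{N})^T$, with $\mathcal{M}$ the block column carrying the factors $(\cdot)\mathbb{G}_1$ in the rows of the $\mathcal{E}\dot{\bar x}_2$ equation and $\mathcal{N}$ the block row carrying the associated $Y_1$ factors. Since $\bar\Sigma=\mathrm{diag}\{l_1,\dots,l_m\}$ with $|l_k|\le 1$ we have $\bar\Sigma^T\bar\Sigma\le I$, so Lemma~\ref{uncertain} gives, for some scalar $\tilde\epsilon>0$,
\begin{equation*}
\mathcal{M}\bar\Sigma\mathcal{N}+(\mathcal{M}\bar\Sigma\mathcal{N})^T\le\tilde\epsilon^{-1}\mathcal{M}\mathcal{M}^T+\tilde\epsilon\,\mathcal{N}^T\mathcal{N}.
\end{equation*}
Hence $\hat{\hat\Omega}<0$ holds for every admissible $\bar\Sigma$ as soon as $\Theta+\tilde\epsilon^{-1}\mathcal{M}\mathcal{M}^T+\tilde\epsilon\,\mathcal{N}^T\mathcal{N}<0$. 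Two successive applications of the Schur complement (Lemma~\ref{p1-schur}) — one to pull out $\tilde\epsilon^{-1}\mathcal{M}\mathcal{M}^T$, one to pull out $\tilde\epsilon\,\mathcal{N}^T\mathcal{N}$ — recast this as the single LMI \eqref{p1-thm2lmi1}, $\hat\Lambda=\begin{bmatrix}\Theta & \tilde B\\ \ast & \tilde\epsilon\end{bmatrix}<0$, in which $\tilde B$ stacks $\mathcal{M}$ next to $\mathcal{N}^T$ and the lower-right block $\tilde\epsilon$ collects the slack terms $-\tilde\epsilon I$ and $-\tilde\epsilon^{-1}I$ (up to the usual rescaling of the scalar).

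Finally, since the algebraic constraint \eqref{p1-thm1lmi} and the auxiliary matrix inequalities \eqref{p1-thm1lmi2}, as well as the admissibility and strict $(\mathcal{Q},\mathcal{S},\mathcal{R})$-dissipativity arguments of Theorem~\ref{p1-thm1}, do not involve $\mathbb{G}$ at all, feasibility of \eqref{p1-thm1lmi}, \eqref{p1-thm1lmi2} and \eqref{p1-thm2lmi1} implies $\hat{\hat\Omega}<0$ for every $\mathbb{G}$ satisfying \eqref{p1-eq7}; Theorem~\ref{p1-thm1} then delivers the conclusion with the same gains $\mathcal{K}_1=Y_1X_1^{-1}$, $\mathcal{K}_2=Y_2X_2^{-1}$. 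I expect the only delicate point to be the structural bookkeeping in the first step: one must verify that the linearizing congruence of Theorem~\ref{p1-thm1} leaves $\mathbb{G}$ in the single left-hand position $(\cdot)\mathbb{G}Y_1$ — never inside an inverse, a Lyapunov product, or a quadratic term — so that the dependence on $\bar\Sigma$ is genuinely affine and is captured by one pair $(\mathcal{M},\mathcal{N})$; everything afterwards (Lemma~\ref{uncertain} plus two Schur complements) is mechanical.
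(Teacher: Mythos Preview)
Your overall strategy---replace $\mathbb{G}$ by $\mathbb{G}_0+\mathbb{G}_1\bar\Sigma$, split $\hat{\hat\Omega}=\Theta+\Delta(\bar\Sigma)$, bound $\Delta(\bar\Sigma)$ via Lemma~\ref{uncertain}, then Schur-complement---is exactly the paper's route. The gap is in the structural claim that $\Delta(\bar\Sigma)$ can be captured by \emph{one} pair $(\mathcal M,\mathcal N)$ and a single slack scalar.

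In $\hat{\hat\Omega}$ the matrix $\mathbb{G}$ does not sit only in the row of the $\mathcal{E}\dot{\bar x}_2$ equation. The Schur complements that produced \eqref{p1-thm1lmi1} pulled out the quadratic terms $\bar\theta^2\mathcal A_i^TZ_2\mathcal A_i$ and $\epsilon\bar{\mathcal B}_i^T\bar{\mathcal B}_i$ (cf.\ \eqref{p1-eq30}, \eqref{p1-eq33}), so $\mathbb{G}$ also appears in each of the auxiliary columns $\hat\Omega_2^T,\dots,\hat\Omega_9^T$ of $\hat{\hat\Omega}_1$, with \emph{different} scalar weights in $\bar\alpha,\bar\beta,\sigma,\delta$. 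Concretely, the ratio of the row-$2$ to row-$6$ entries in the $\hat\Omega_2$-column is $\bar\alpha/(1-\bar\alpha)$, while in the $\hat\Omega_3$-column it is $1$; a rank-one factorisation $\mathcal M\bar\Sigma\mathcal N$ forces these ratios to coincide, so for generic $\bar\alpha$ no single pair exists. This is precisely the ``delicate bookkeeping'' you flagged---it fails rather than succeeds.

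The paper handles this by writing $\Delta(\bar\Sigma)=\sum_{i=1}^{8}\bigl(\tilde B_i^T\bar\Sigma^T\tilde Y_i+\tilde Y_i^T\bar\Sigma\tilde B_i\bigr)$ (eq.~\eqref{p1-eq40}), applying Lemma~\ref{uncertain} eight times with independent scalars $\tilde\epsilon_1,\dots,\tilde\epsilon_8$, and then Schur-complementing all sixteen quadratic terms simultaneously. The block $\tilde\epsilon$ in \eqref{p1-thm2lmi1} is therefore $\mathrm{diag}\{-\tilde\epsilon_1 I,-\tilde\epsilon_1 I,\dots,-\tilde\epsilon_8 I,-\tilde\epsilon_8 I\}$, and $\tilde B$ stacks the sixteen columns $\tilde\epsilon_i\tilde B_i^T,\tilde Y_i^T$ (Appendix~\ref{sec:app:3}). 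Your argument goes through once you replace the single pair by this eight-term decomposition; nothing else changes.
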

\begin{proof}
We take actuator fault matrix $\mathbb{G}$ is unknown and satisfying the fault constraint \eqref{p1-eq7}, the LMI condition \eqref{p1-thm1lmi1} in Theorem \ref{p1-thm1} for the design of reliable controller can be expressed as
\begin{align}\label{p1-eq40}
\Lambda=\Theta&+
\tilde{B}_1^T\Sigma^T\tilde{Y}_1+\tilde{Y}_1^T\Sigma \tilde{B}_1
+\tilde{B}_2^T\Sigma^T\tilde{Y}_2+\tilde{Y}_2^T\Sigma \tilde{B}_2\nonumber\\&
+\tilde{B}_3^T\Sigma^T\tilde{Y}_3+\tilde{Y}_3^T\Sigma \tilde{B}_3
+\tilde{B}_4^T\Sigma^T\tilde{Y}_4+\tilde{Y}_4^T\Sigma \tilde{B}_4\nonumber\\&
+\tilde{B}_5^T\Sigma^T\tilde{Y}_5+\tilde{Y}_5^T\Sigma\tilde{B}_5
+\tilde{B}_6^T\Sigma^T\tilde{Y}_6+\tilde{Y}_6^T\Sigma \tilde{B}_6\nonumber\\&
+\tilde{B}_7^T\Sigma^T\tilde{Y}_7+\tilde{Y}_7^T\Sigma \tilde{B}_7
+\tilde{B}_8^T\Sigma^T\tilde{Y}_8+\tilde{Y}_8^T\Sigma \tilde{B}_8,
\end{align}
where $\Theta$ is obtained by replacing $\mathbb{G}$ by $\mathbb{G}_0$ in $ \hat{\hat{\Omega}}$.
Thus, it is immediately follows from Lemma \ref{uncertain} that
\begin{align}\label{p1-eq41}
\hat{\Lambda}=\Theta&+
\tilde{\epsilon}_1\tilde{B}_1^T\tilde{B}_1+\tilde{\epsilon}_1^{-1}\tilde{Y}_1^T\tilde{Y}_1
+\tilde{\epsilon}_2\tilde{B}_2^T\tilde{B}_2+\tilde{\epsilon}_2^{-1}\tilde{Y}_2^T\tilde{Y}_2\nonumber\\&
+\tilde{\epsilon}_3\tilde{B}_3^T\tilde{B}_3+\tilde{\epsilon}_3^{-1}\tilde{Y}_3^T\tilde{Y}_3
+\tilde{\epsilon}_4\tilde{B}_4^T\tilde{B}_4+\tilde{\epsilon}_4^{-1}\tilde{Y}_4^T\tilde{Y}_4\nonumber\\&
+\tilde{\epsilon}_5\tilde{B}_5^T\tilde{B}_5+\tilde{\epsilon}_5^{-1}\tilde{Y}_5^T\tilde{Y}_5
+\tilde{\epsilon}_6\tilde{B}_6^T\tilde{B}_6+\tilde{\epsilon}_6^{-1}\tilde{Y}_6^T\tilde{Y}_6\nonumber\\&
+\tilde{\epsilon}_7\tilde{B}_7^T\tilde{B}_7+\tilde{\epsilon}_7^{-1}\tilde{Y}_7^T\tilde{Y}_7
+\tilde{\epsilon}_8\tilde{B}_8^T\tilde{B}_8+\tilde{\epsilon}_8^{-1}\tilde{Y}_8^T\tilde{Y}_8.
\end{align}
Then by using Lemma \ref{p1-schur}, aforementioned condition  \eqref{p1-eq41} is equivalent to LMI \eqref{p1-thm2lmi1}. Hence, the singular NCCSs \eqref{p1-eq14} is mean-square asymptotically stabilized through the proposed controller scheme and strictly $(\mathcal{Q}, \mathcal{S}, \mathcal{R})$ dissipative. This completes the proof of this theorem.
\end{proof}

\section{Numerical example}\label{p1-sec4}
{In this section, we present a numerical example to illustrate the effectiveness of the theoretical results. For this purpose, a power plant boiler-turbine system is considered which can be expressed by $\sum_1$ and $\sum_2$. Then, the system parameter values are borrowed from \cite{NCCS} which are given below:
}
\begin{align*}
\mathcal{E}&=\begin{bmatrix}
1 & 0\\0 & 0
\end{bmatrix},\
A_1=\begin{bmatrix}
-1 & 0\\-1 & -2
\end{bmatrix},\
A_2=
\begin{bmatrix}
1.3 & 1\\0.2 & 0
\end{bmatrix},\\
A_3&=
\begin{bmatrix}
0.2 & 0.1 \\ 0.2 & 1
\end{bmatrix},\
B_1=\begin{bmatrix}
0.2 \\ 0.1
\end{bmatrix},\
B_2=
\begin{bmatrix}
0.2 \\ 1
\end{bmatrix},\
B_3=
\begin{bmatrix}
-0.4 \\0.1
\end{bmatrix},\\
C_1&=\begin{bmatrix}
0 & 0.1
\end{bmatrix},\
C_2=\begin{bmatrix}
-0.3 & 0.1
\end{bmatrix},\
D_1=0.2,\
D_2=0.1.
\end{align*}
{\bf Case 1:} If we set $\bar{\alpha}=0.25$, then the signal is transmitted via mixed-triggered scheme. In this scheme, the rest of parameters involved in this simulations are given as
$\zeta_{2}=0.5$,
$\tau_{2}=0.5$,
$d_{2}=0.5$,
$\bar{\theta}=0.5$,
$\mu=0.16$,
$\lambda=0.05$,
$\gamma=0.1$,
$\epsilon_1=\epsilon_2=\epsilon_3=\epsilon_4=\epsilon_f=1$,
$\epsilon=0.2$,
$h=0.1s$,
$\bar{\beta}=0.02$
and also let we take matrices
$\mathcal{Q}=-0.8$,
$\mathcal{S}=-0.8$,
$\mathcal{R}=1.5$,
$F=\mbox{diag}\{0.02, 0.1\}$.
Now, we look in to the actuator fault matrix $\mathbb{G}$ lie in an interval $[0.6, 0.8]$. Then, by solving the LMIs given in Theorem \ref{p1-thm2}, the corresponding $(\mathcal{Q}, \mathcal{S}, \mathcal{R})$ dissipative control gain matrices can be obtained as
$\mathcal{K}_{1}=10^{-3}\times[ -0.2030  \ \  0.4837]$ and  $\mathcal{K}_{2}= [ -3.8497  \ \ -2.4732 ]$, and event-triggered matrix is
\begin{eqnarray}W=
\begin{bmatrix}
3.9551  &  0.2531\\
    0.2531  &  5.0434
\end{bmatrix}.
\end{eqnarray}
The initial conditions of the primary and secondary plants are given as  $\begin{bmatrix} -5.5 & -2.5\end{bmatrix}^T$ and
$\begin{bmatrix} 6 & -12.96 \end{bmatrix}^T$, respectively.
The external disturbance input $w(t)$ is chosen as
\begin{eqnarray}
w(t)=
\left\lbrace
\begin{array}{l}
\sin(t),\ 0<t\leq 5,\\
0, \mbox{otherwise}
\end{array}.
\right.
\end{eqnarray}
The nonlinear signal of cyber attack is taken as
\begin{eqnarray}f(x_1)=
\begin{bmatrix}
-\tanh(0.02x_{11}(t))\\
-\tanh(0.1x_{12}(t))
\end{bmatrix}.
\end{eqnarray}
The system state and control responses of primary plant are shown in Fig. \ref{p1-fig2}. Also, the state and control responses of secondary plant are presented in Fig. \ref{p1-fig3}. The Bernoulli distribution of random variables $\alpha(t)$ and $\beta(t)$ are plotted in Fig. \ref{p1-fig4} which are introduced to connecting the mixed triggered scheme by switch rule and occurrence of cyber attack used in the simulation. In Fig. \ref{p1-fig7}, the simulation curves of  attack function is depicted.
\begin{figure}[!htb]
	\centering
	\begin{minipage}{\textwidth}
		\begin{minipage}[b]{.1\textwidth}
			\includegraphics[width=4.5cm, height=3.5cm]{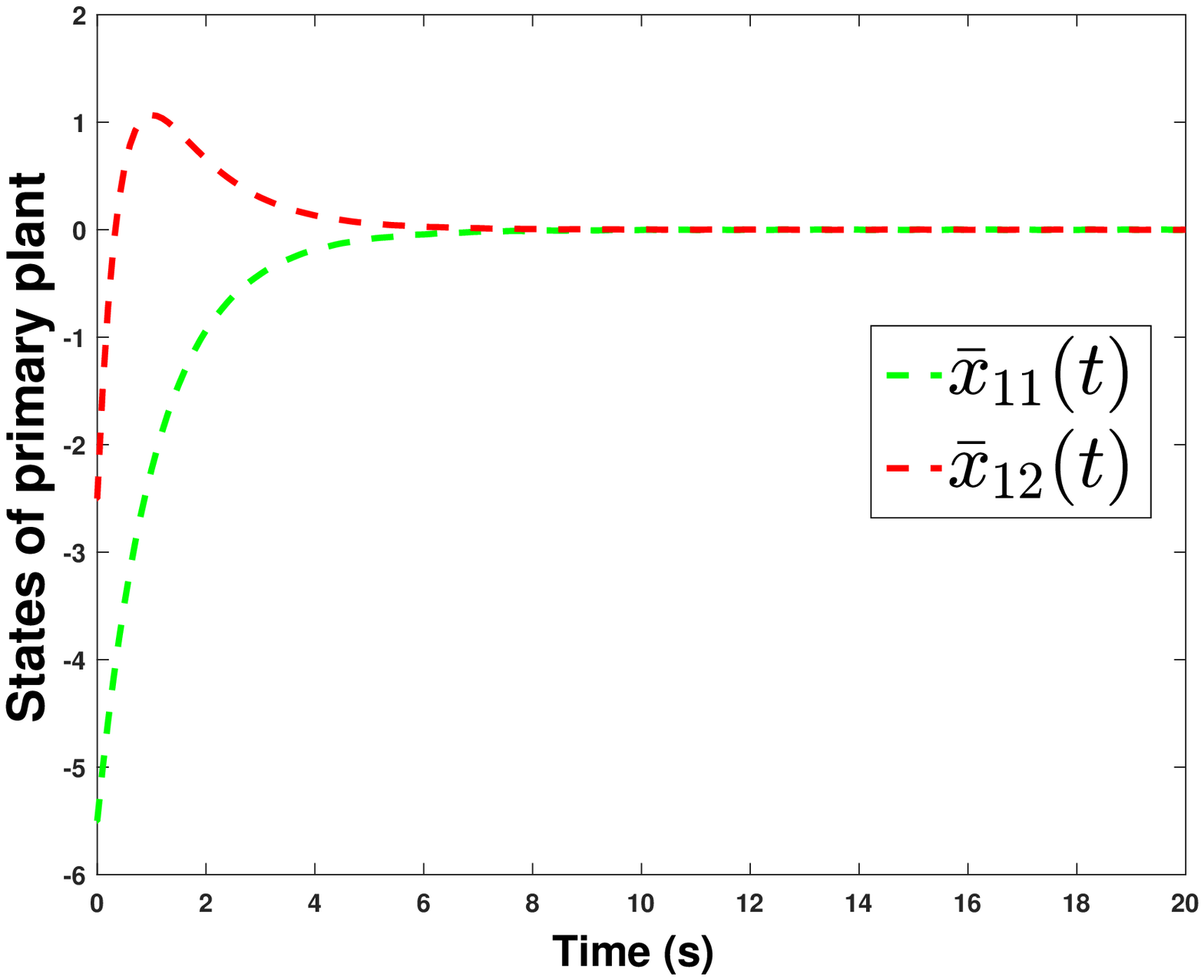}
			\centering
		\end{minipage}
		\begin{minipage}[b]{.5\textwidth}
			\includegraphics[width=4.5cm, height=3.5cm]{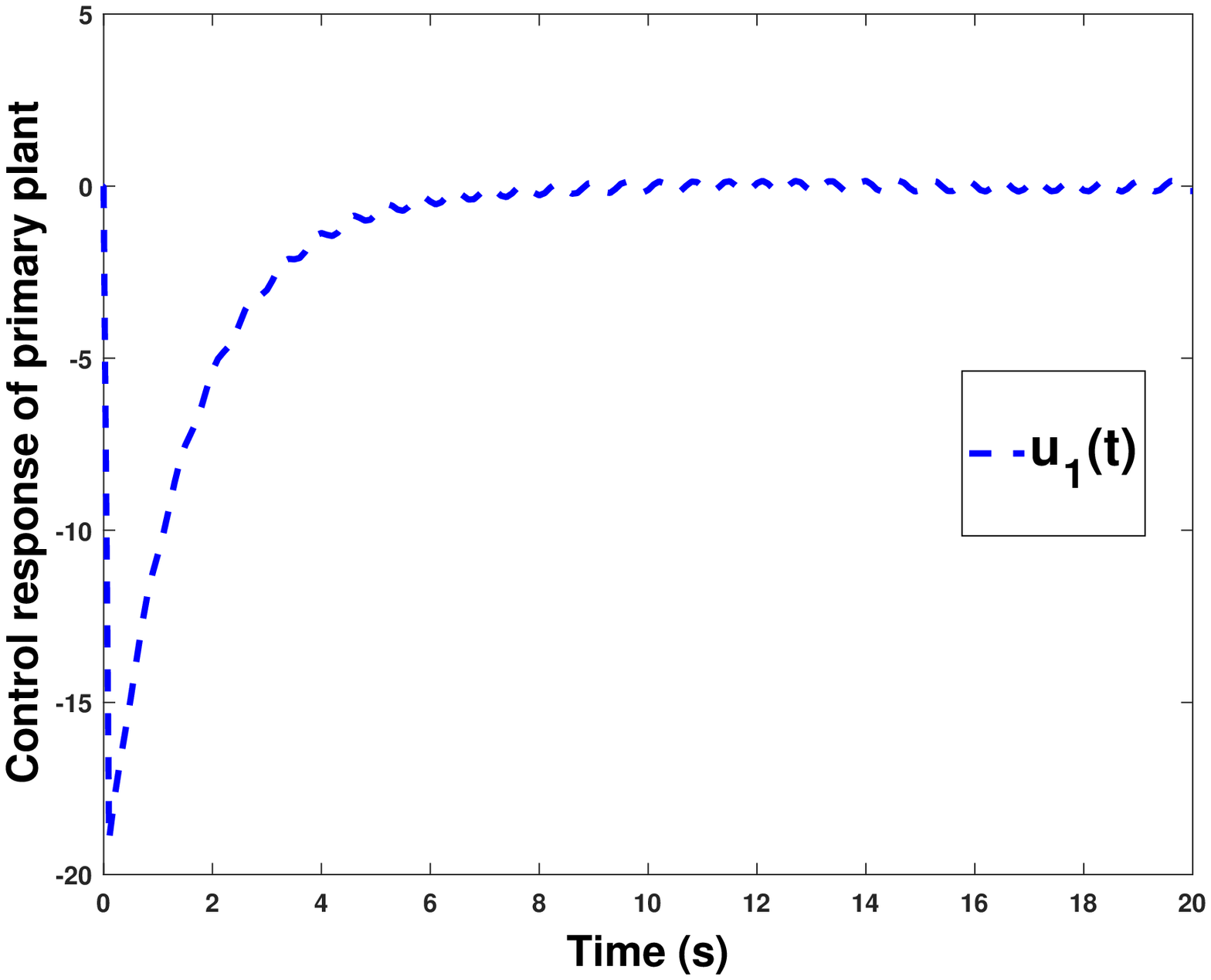}
			\centering
		\end{minipage}
	\end{minipage}
		\caption{State and control responses for primary plant}\label{p1-fig2}
\end{figure}
\begin{figure}[!htb]
	\centering
	\begin{minipage}{\textwidth}
		\begin{minipage}[b]{.1\textwidth}
			\includegraphics[width=4.5cm, height=3.5cm]{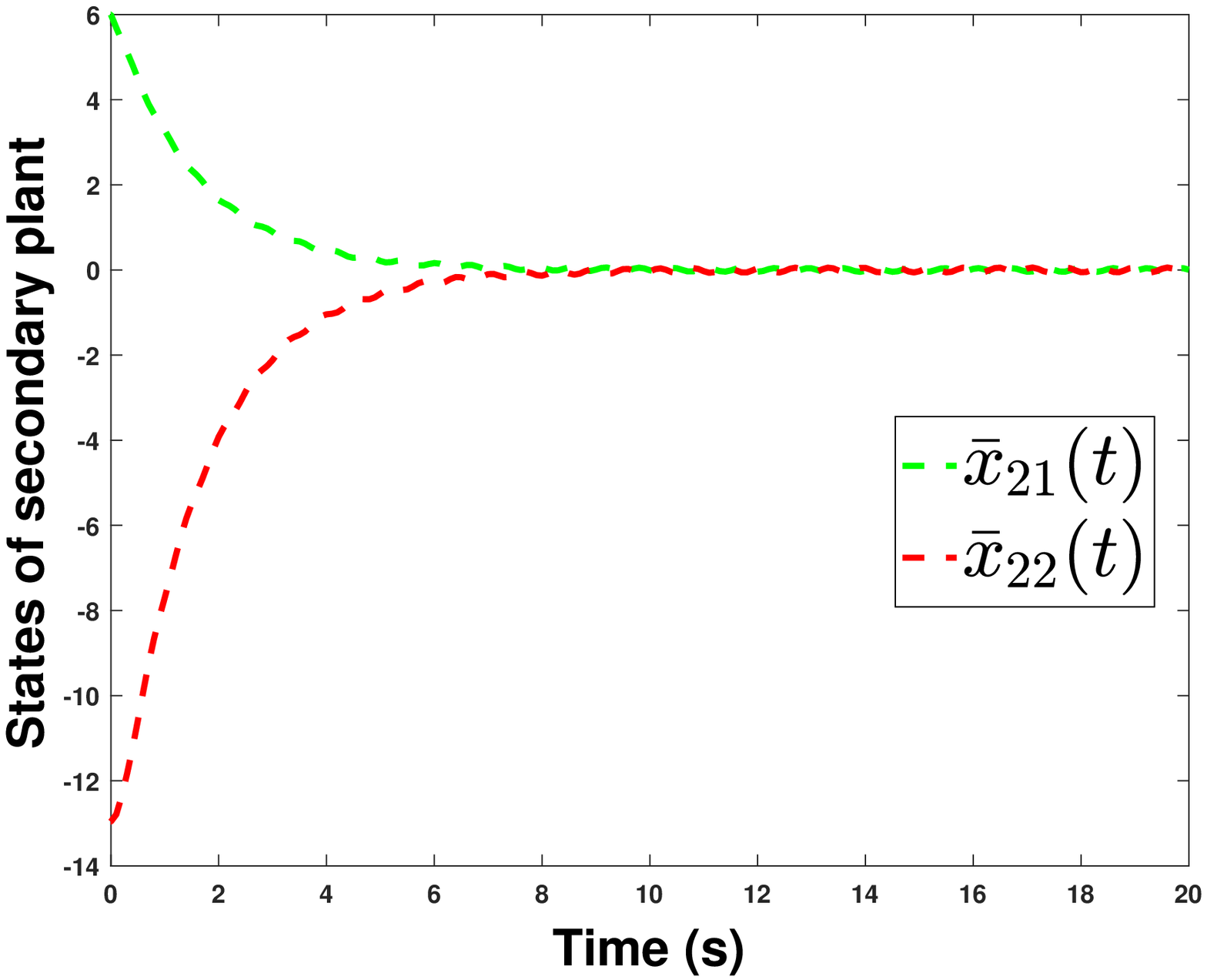}
			\centering
		\end{minipage}
		\begin{minipage}[b]{.5\textwidth}
			\includegraphics[width=4.5cm, height=3.5cm]{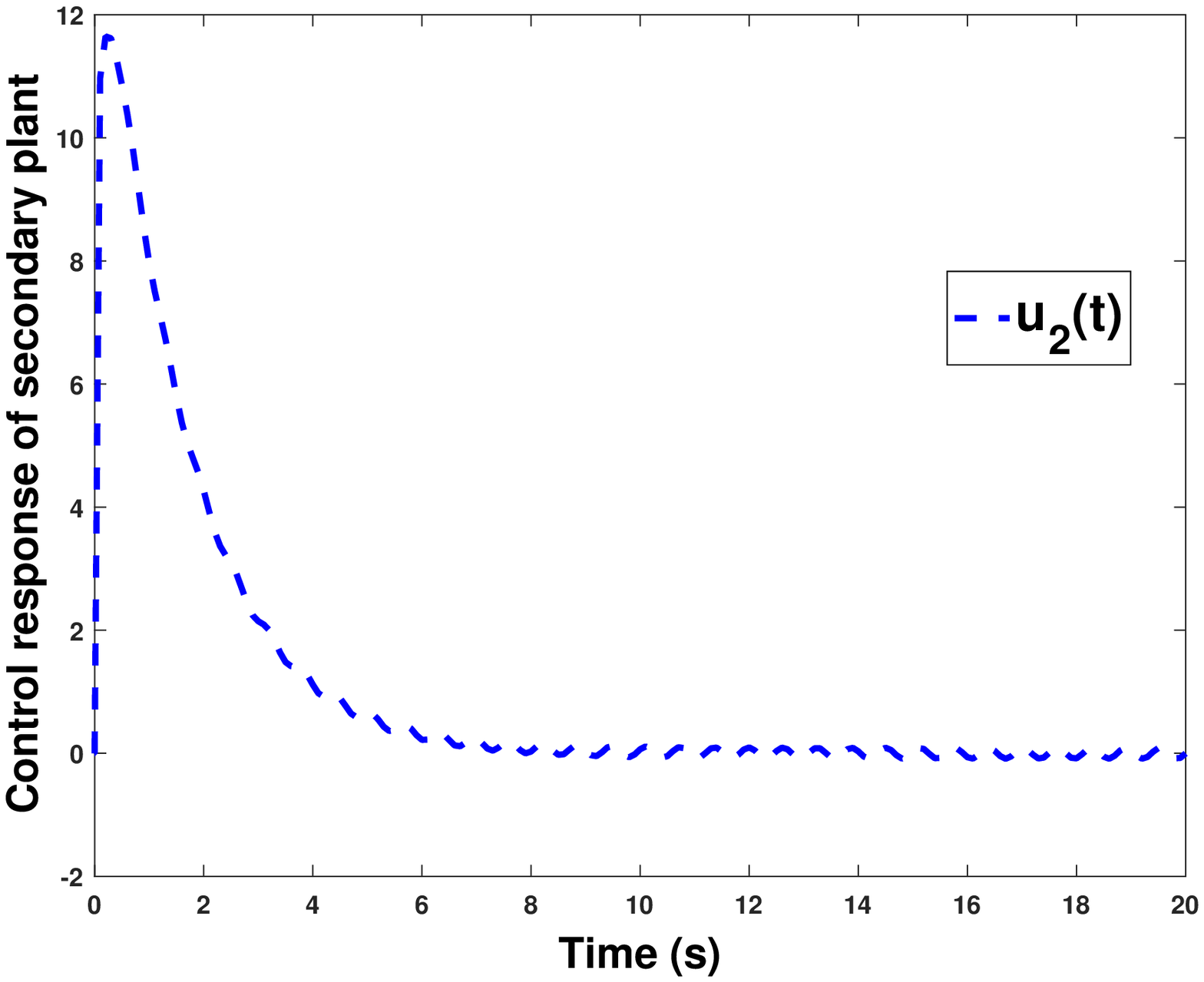}
			\centering
		\end{minipage}
	\end{minipage}
		\caption{ State and control responses for secondary plant}\label{p1-fig3}
\end{figure}
\begin{figure}[!htb]
	\centering
	\begin{minipage}{\textwidth}
		\begin{minipage}[b]{.1\textwidth}
			\includegraphics[width=4.5cm, height=3.5cm]{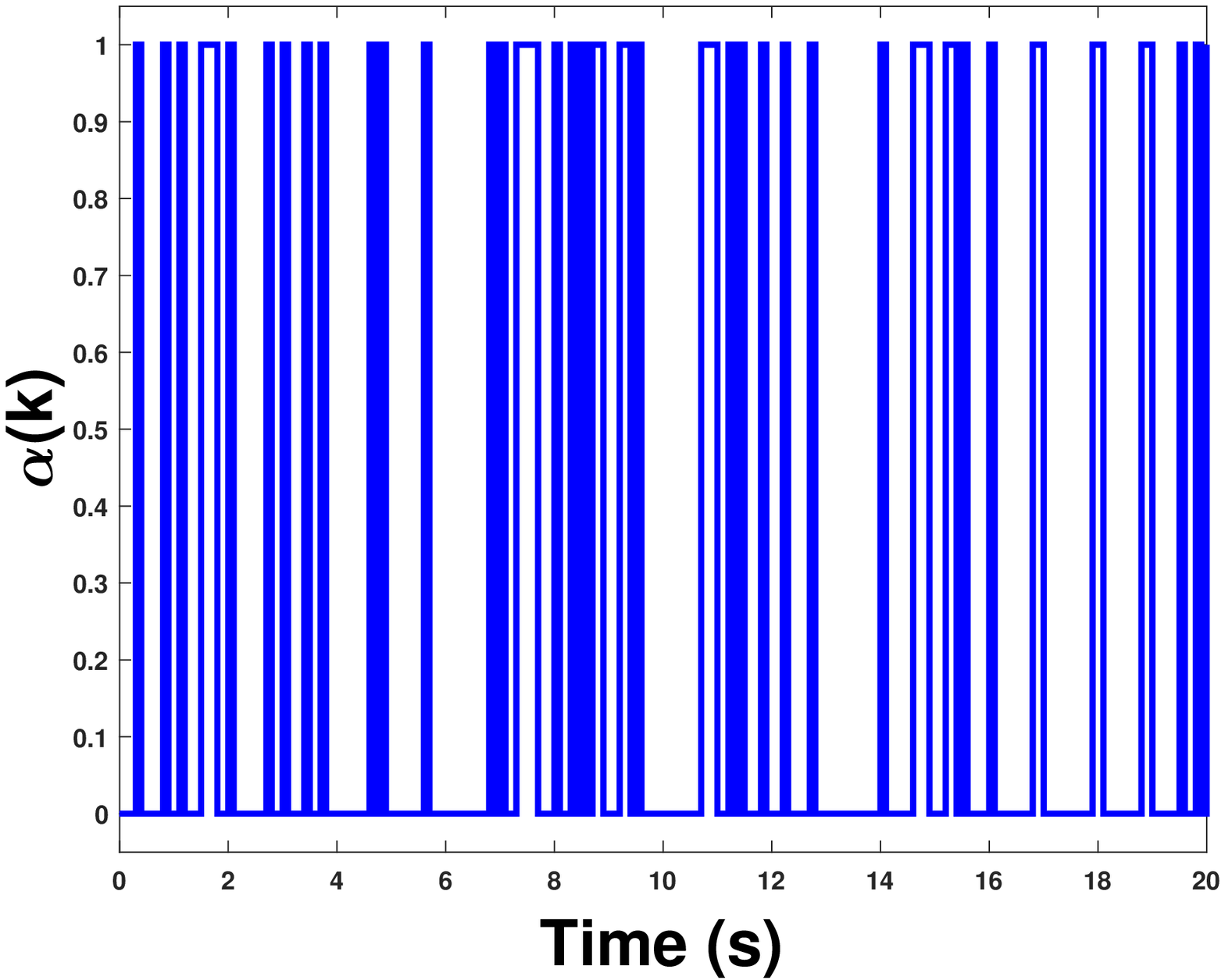}
			\centering
		\end{minipage}
		\begin{minipage}[b]{.5\textwidth}
			\includegraphics[width=4.5cm, height=3.5cm]{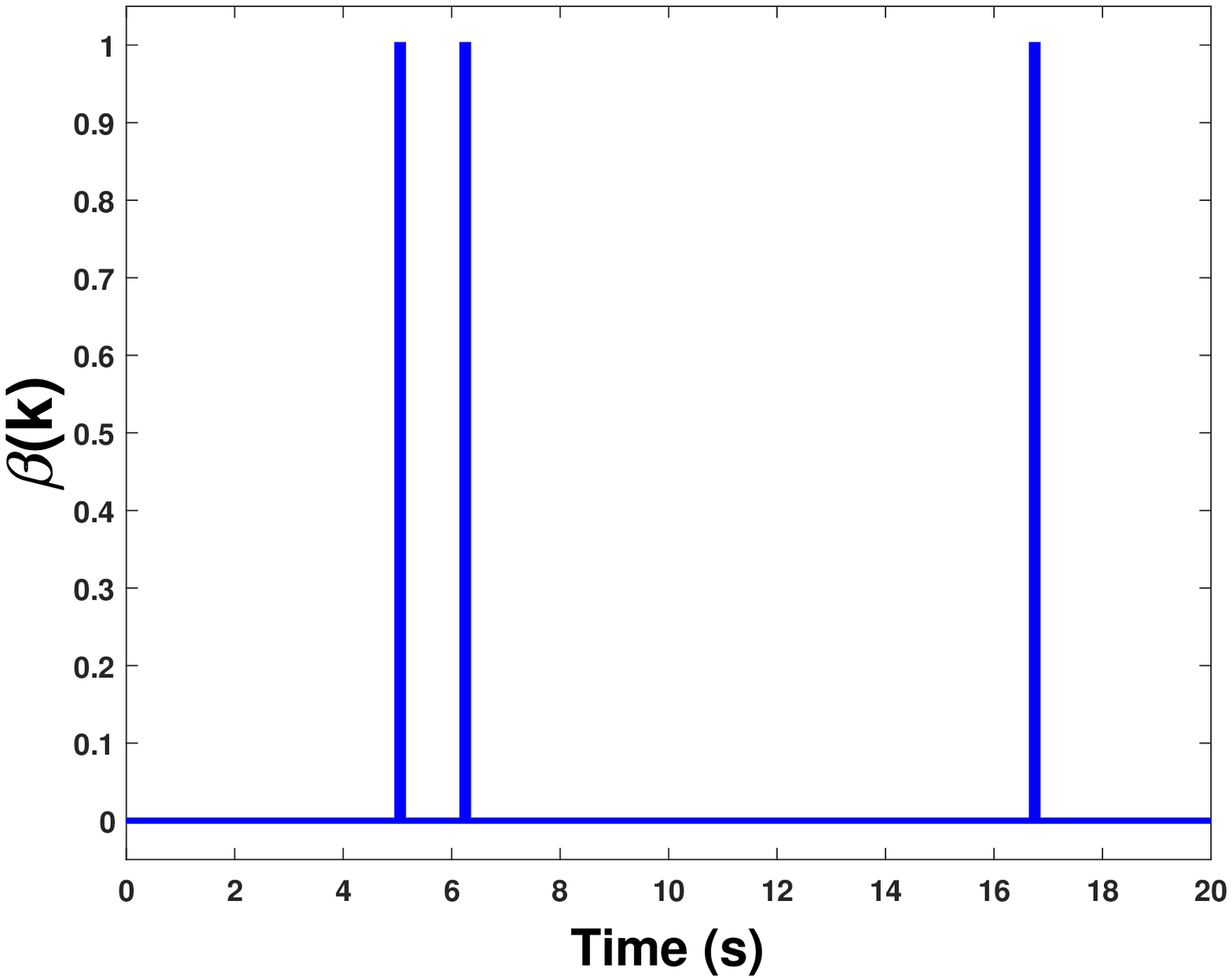}
			\centering
		\end{minipage}
	\end{minipage}
		\caption{$\alpha(t)$ with $\bar{\alpha}=0.25$ and $\beta(t)$ with $\bar{\beta}=0.02$}\label{p1-fig4}
\end{figure}
\begin{figure}[!htb]
	\centering
		\begin{minipage}[b]{.4\textwidth}
			\includegraphics[width=4.5cm, height=3.5cm]{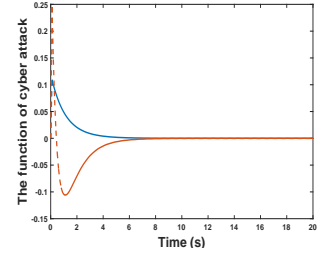}
			\centering
		\end{minipage}
		\caption{Attack function }\label{p1-fig7}
\end{figure}
\begin{figure}[!htb]
	\centering
	\begin{minipage}{\textwidth}
		\begin{minipage}[b]{.1\textwidth}
			\includegraphics[width=4.5cm, height=3.5cm]{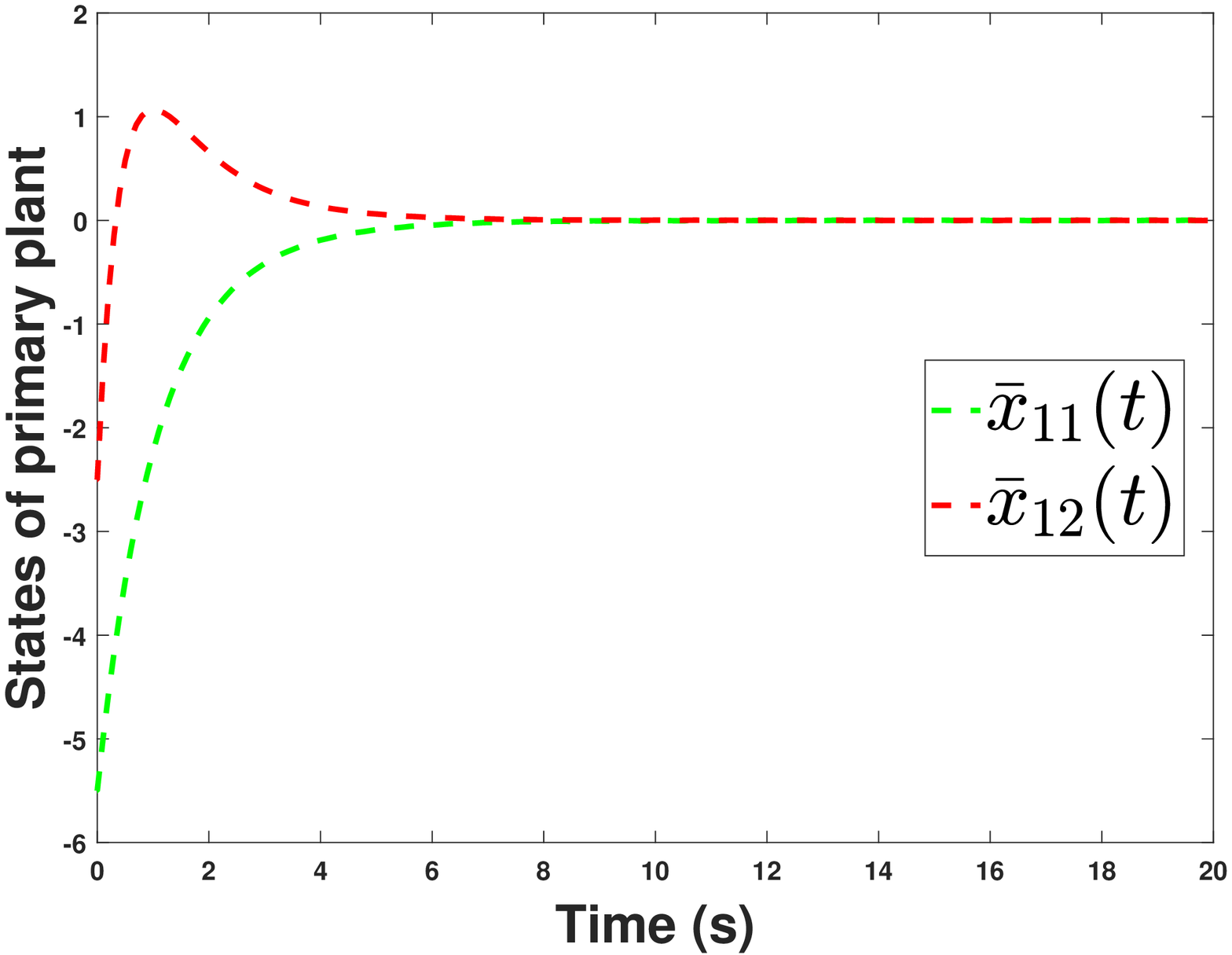}
			\centering
		\end{minipage}
		\begin{minipage}[b]{.5\textwidth}
			\includegraphics[width=4.5cm, height=3.5cm]{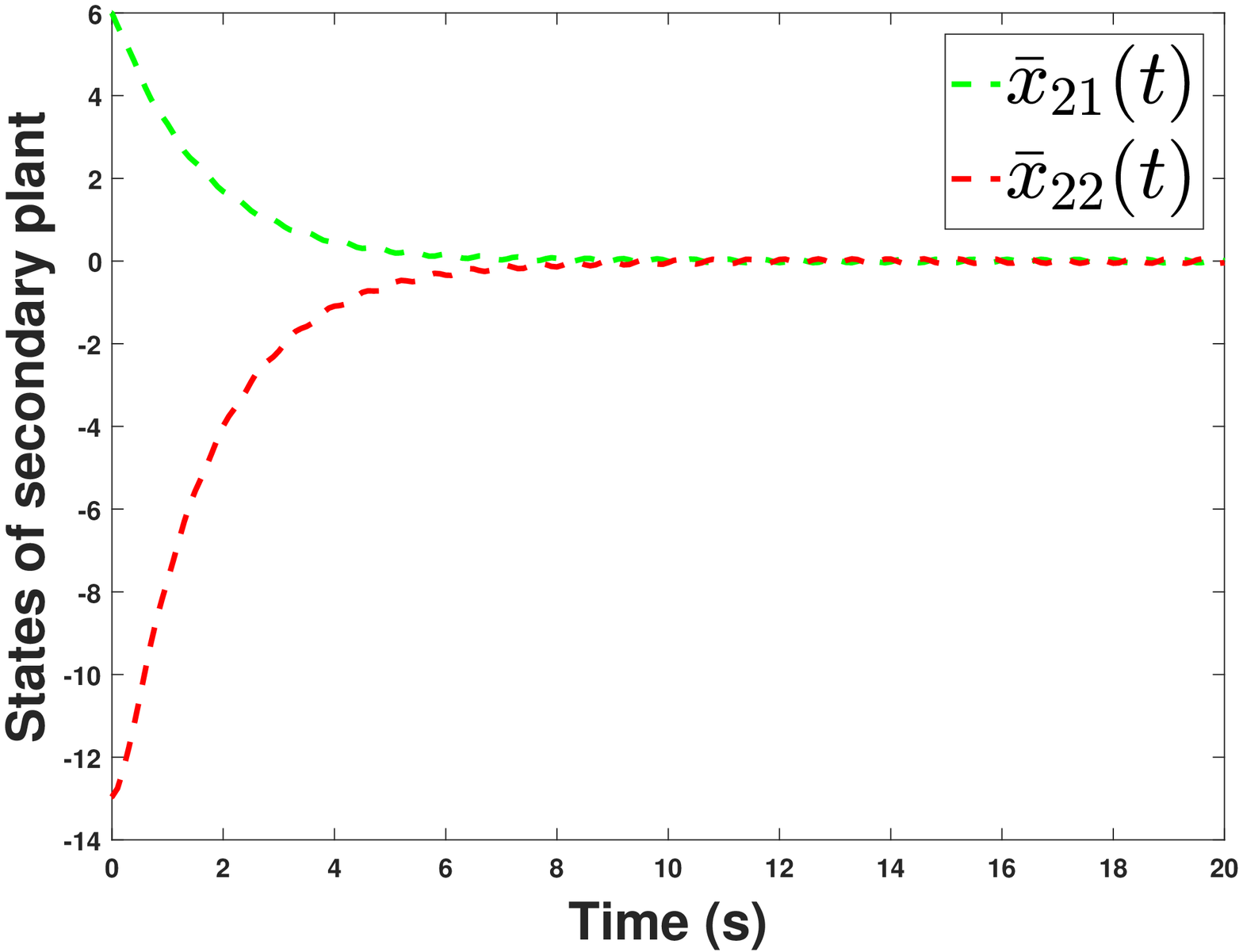}
			\centering
		\end{minipage}
	\end{minipage}
		\caption{State responses of primary and secondary plants when $\bar{\alpha}=1$}\label{p1-fig5}
\end{figure}
\begin{figure}[!htb]
	\centering
	\begin{minipage}{\textwidth}
		\begin{minipage}[b]{.1\textwidth}
			\includegraphics[width=4.5cm, height=3.5cm]{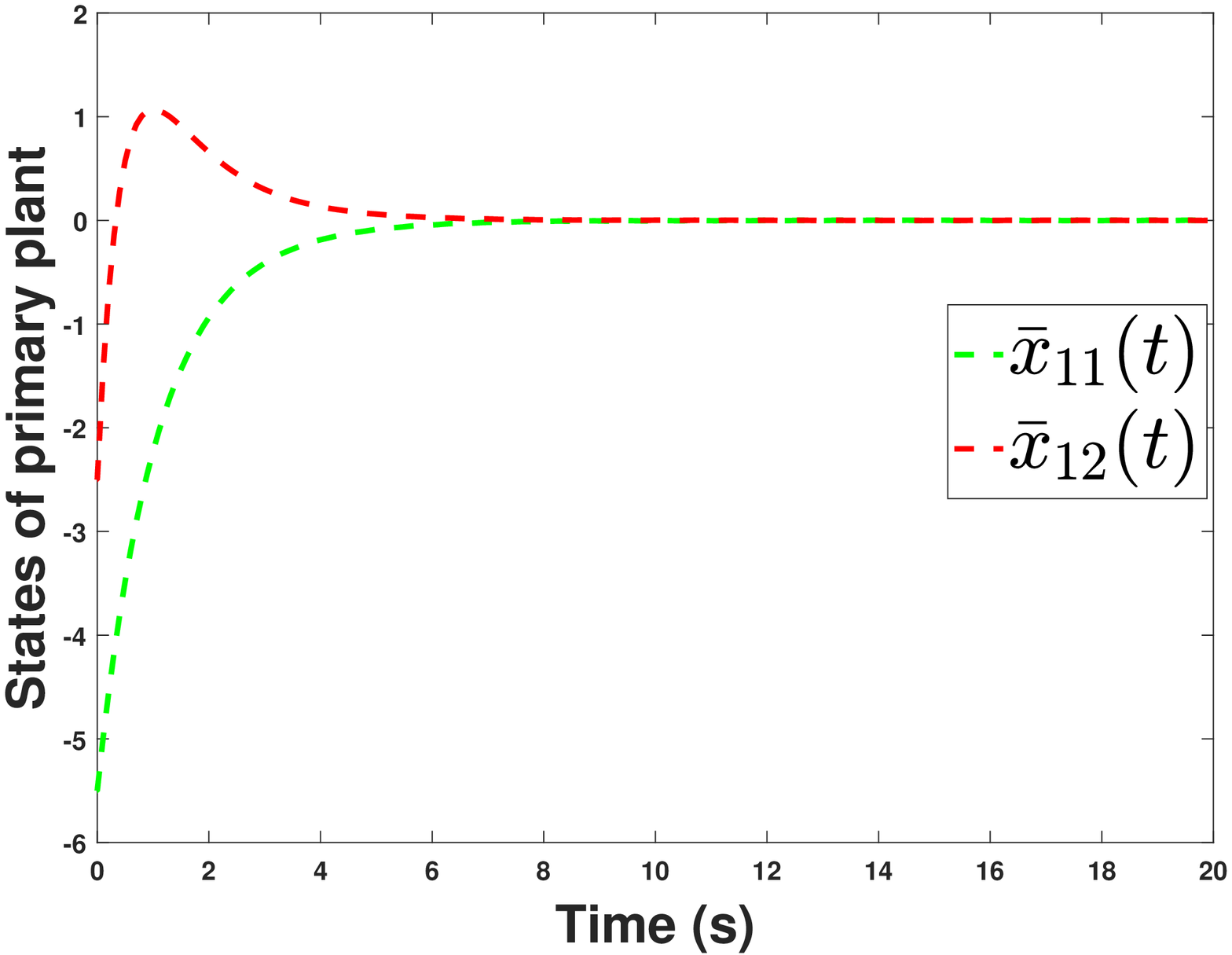}
			\centering
		\end{minipage}
		\begin{minipage}[b]{.5\textwidth}
			\includegraphics[width=4.5cm, height=3.5cm]{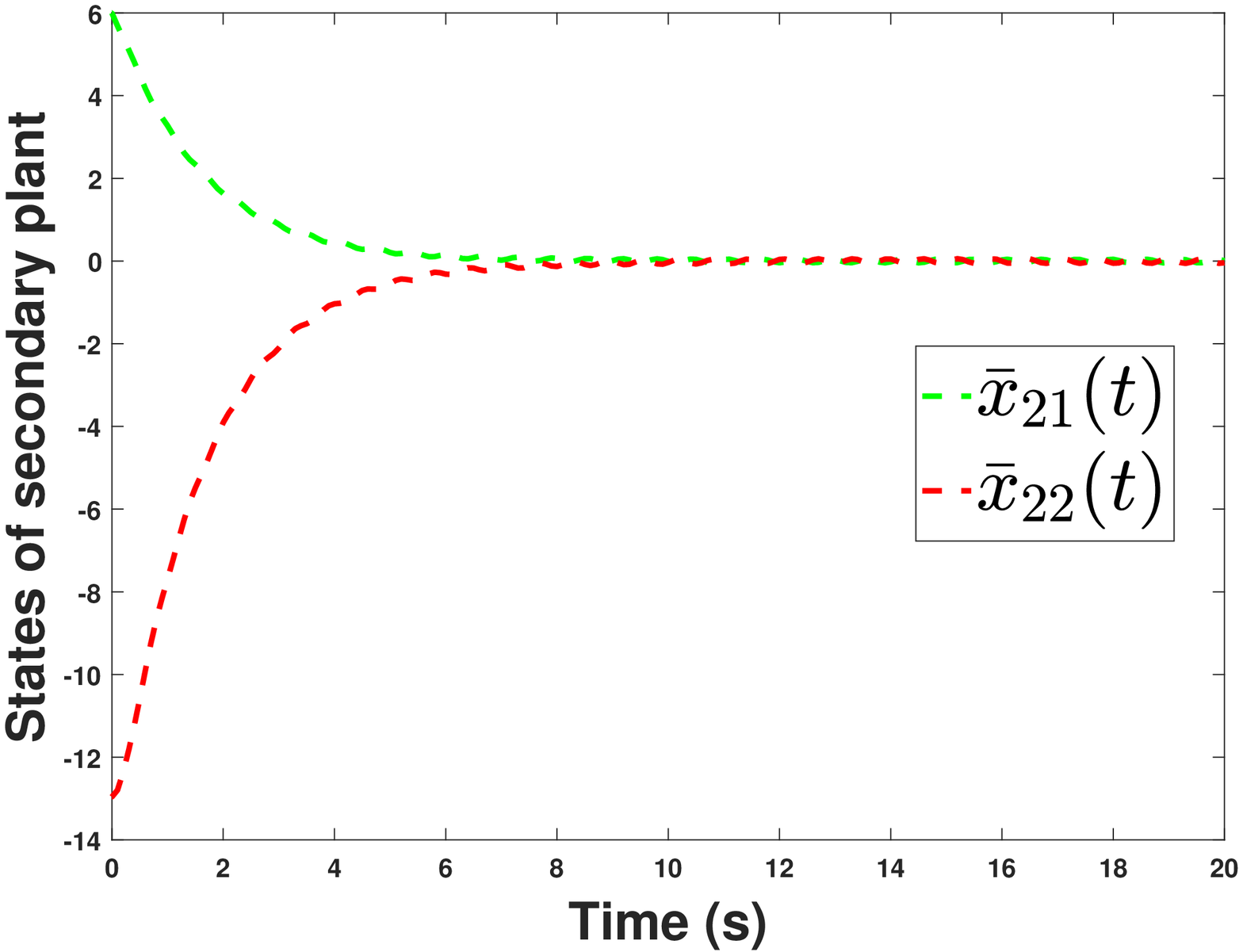}
			\centering
		\end{minipage}
	\end{minipage}
		\caption{State responses of primary and secondary plants when $\bar{\alpha}=0$}\label{p1-fig6}
\end{figure}
\begin{figure}[!htb]
	\centering
	\begin{minipage}{\textwidth}
		\begin{minipage}[b]{.1\textwidth}
			\includegraphics[width=4.5cm, height=3.5cm]{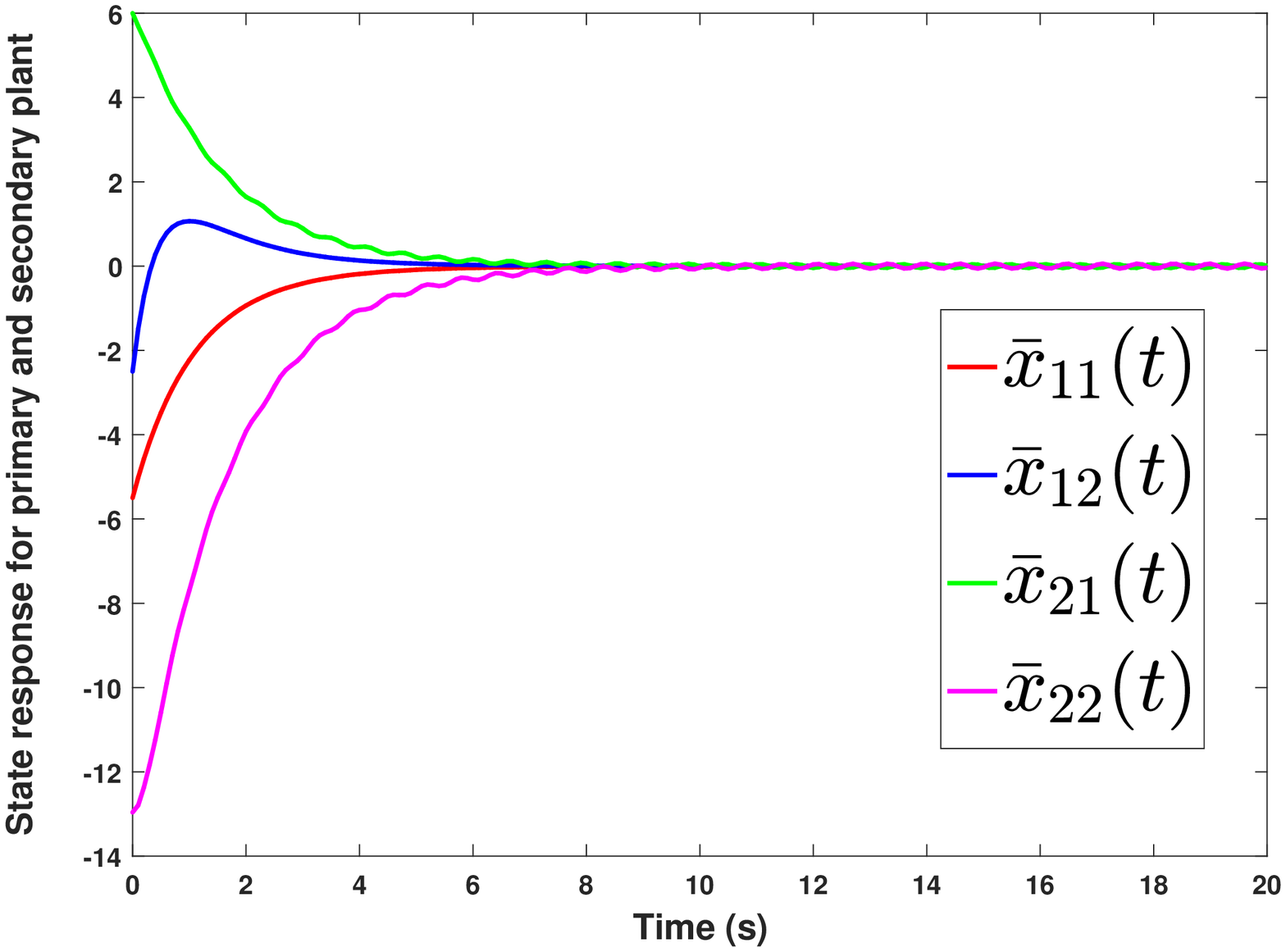}
			\centering
		\end{minipage}
		\begin{minipage}[b]{.5\textwidth}
			\includegraphics[width=4.5cm, height=3.5cm]{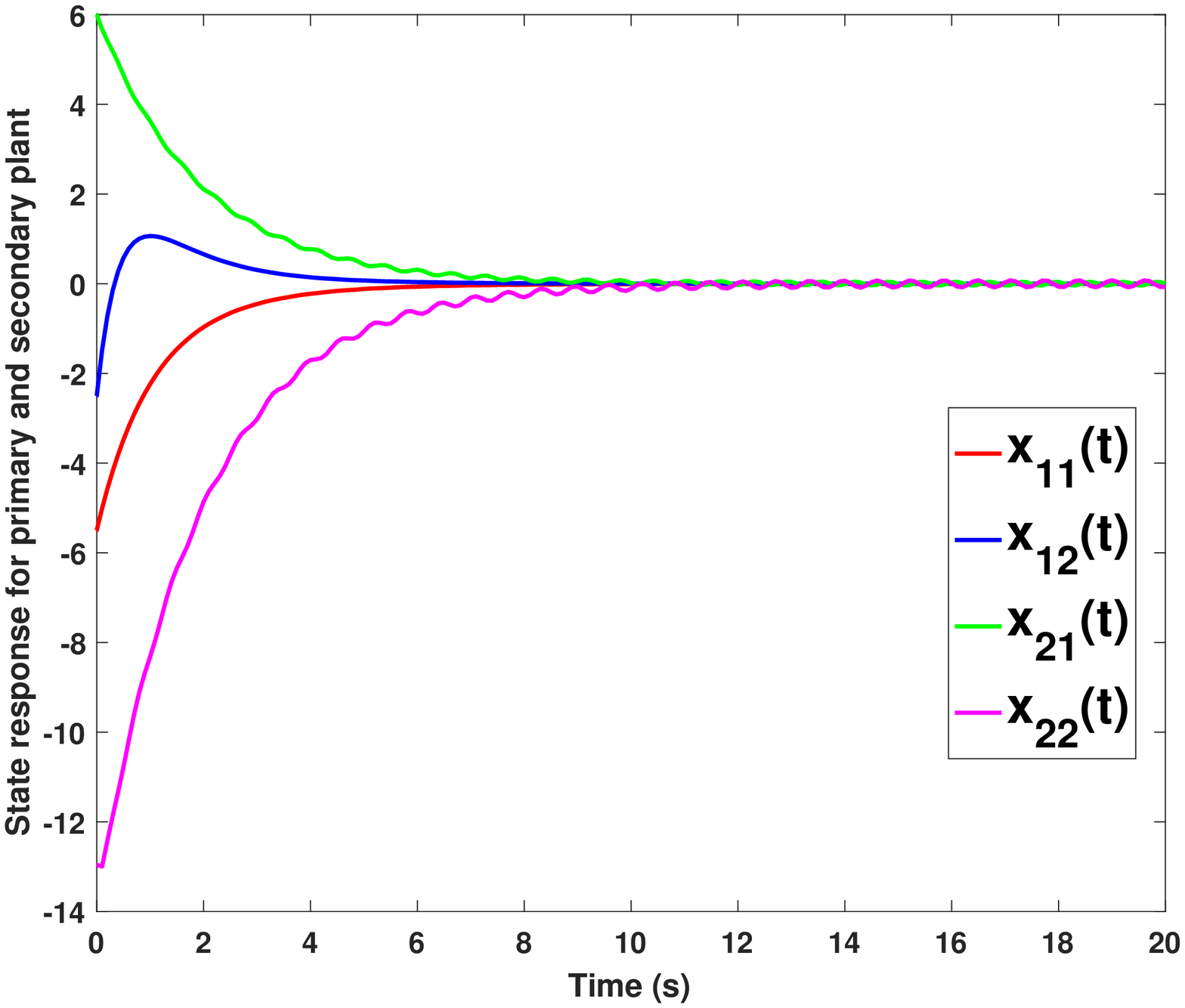}
			\centering
		\end{minipage}
	\end{minipage}
		\caption{State response of proposed and Figure 4 in \cite{NCCS}}\label{p1-fig8}
\end{figure}

{\bf Case 2:} By letting $\bar{\alpha}=1$, then the signal is transmitted via time-triggered scheme. choose sampling period $h=0.01s$.
Moreover, the initial condition, exogenous disturbance signal,
and the rest of parameters are taken as same as in the previous
case. From Theorem \ref{p1-thm2}, we calculate the parameters of the controller gain matrices as
$\mathcal{K}_{1}=10^{-3}\times[ -0.2103 \ \ 0.1165]$ and  $\mathcal{K}_{2}= [-3.7951 \ \ -2.4750]$. The state responses of primary plant and secondary plant are shown in Fig. \ref{p1-fig5}.

{\bf Case 3:} If we fix $\bar{\alpha}=0$, then the signal is transmitted via event-triggered scheme.
According to Theorem \ref{p1-thm2}, the state feed-back controller gains are achieved as
$\mathcal{K}_{1}=10^{-3}\times[  -0.1581  \ \  0.1355]$ and  $\mathcal{K}_{2}= [ -3.8146   -2.4847]$, and the corresponding event-triggered matrix is
\begin{eqnarray}
W=
\begin{bmatrix}
3.3959  &  0.2204\\
    0.2204  &  4.3544
\end{bmatrix}.
\end{eqnarray}
The state responses of primary plant and secondary plant are shown in Fig. \ref{p1-fig6}.
From the simulation results, the state response of the singular NCCSs under the proposed controller scheme and the $H_\infty$ controller scheme are given in Fig. \ref{p1-fig8}. We easily conclude from these figures that the system trajectories converges quickly to the equilibrium point under the proposed controller then the $H_\infty$ controller in \cite{NCCS} which shows the superiority of the proposed controller scheme.
\section{CONCLUSIONS}\label{p1-sec5}
In this paper, the mixed-triggered reliable control problem for singular networked cascade control systems with actuator saturation and randomly occurring cyber attack has been studied. In particular, a mixed-triggered scheme is introduced to reduce the burden of network bandwidth which is modeled in a probabilistic way by using Bernoulli distributed random variable to describe the switching rule connecting time and event-triggered. With the help of LMI technique and Lyapunov-Krasovskii functional, a set of sufficient conditions has been obtained for guaranteeing the closed-loop singular NCCSs can achieve the desired results. At last, the power plant-boiler-turbine system is employed to illustrate the effectiveness of the proposed method.

\section{Appendix}
\subsection{Parameters in Theorem~\ref{p1-thm1}}\label{sec:app:1}
The parameters in the matrix $\check\Omega$ are
$\check{\Omega}_{1,1}=2A_1X_1+\hat{Q}_1+\hat{Q}_2+\hat{Q}_3-4\hat{R}_1-4\hat{R}_2-4\hat{R}_3$,
$\check{\Omega}_{1,2}=-2\hat{R}_1-(\hat{M}_1+\hat{M}_2+\hat{M}_3+\hat{M}_4)^T$,
$\check{\Omega}_{1,3}=(\hat{M}_1+\hat{M}_2-\hat{M}_3-\hat{M}_4)^T$,
$\check{\Omega}_{1,4}=6\hat{R}_1$,
$\check{\Omega}_{1,5}=2(\hat{M}_3+\hat{M}_4)^T$,
$\check{\Omega}_{1,6}=-2\hat{R}_2-(\hat{N}_1+\hat{N}_2+\hat{N}_3+\hat{N}_4)^T$,
$\check{\Omega}_{1,7}=(\hat{N}_1+\hat{N}_2-\hat{N}_3-\hat{N}_4)^T$,
$\check{\Omega}_{1,8}=6\hat{R}_2$,
$\check{\Omega}_{1,9}=2(\hat{N}_3+\hat{N}_4)^T$,
$\check{\Omega}_{1,10}=-2\hat{R}_3-(\hat{S}_1+\hat{S}_2+\hat{S}_3+\hat{S}_4)^T$,
$\check{\Omega}_{1,11}=(\hat{S}_1+\hat{S}_2-\hat{S}_3-\hat{S}_4)^T$,
$\check{\Omega}_{1,12}=6\hat{R}_3$,
$\check{\Omega}_{1,13}=2(\hat{S}_3+\hat{S}_4)^T$,
$\check{\Omega}_{1,14}=B_1C_2X_2$,
$\check{\Omega}_{1,22}=B_1D_2-X_1C_1^T\mathcal{S}$,
$\check{\Omega}_{2,2}=-8\hat{R}_1+2(\hat{M}_1-\hat{M}_2+\hat{M}_3-\hat{M}_4)^T$,
$\check{\Omega}_{2,3}=-2\hat{R}_1+(-\hat{M}_1+\hat{M}_2+\hat{M}_3-\hat{M}_4)^T$,
$\check{\Omega}_{2,4}=6\hat{R}_1+2(\hat{M}_2+\hat{M}_4)^T$,
$\check{\Omega}_{2,5}=6\hat{R}_1-2\hat{M}_3^T+2\hat{M}_4^T$,
$\check{\Omega}_{2,14}=(1-\bar{\beta})\bar{\alpha}Y_1^T\mathbb{G}^TB_2^T$,
$\check{\Omega}_{3,3}=-4\hat{R}_1-\hat{Q}_1$,
$\check{\Omega}_{3,4}=-2\hat{M}_2^T+2\hat{M}_4^T$,
$\check{\Omega}_{3,5}=6\hat{R}_1$,
$\check{\Omega}_{4,4}=-12\hat{R}_1$,
$\check{\Omega}_{4,5}=-4\hat{M}_4^T$,
$\check{\Omega}_{5,5}=-12\hat{R}_1$,
$\check{\Omega}_{6,6}=-8\hat{R}_2+2(\hat{N}_1-\hat{N}_2+\hat{N}_3-\hat{N}_4)^T+\mu \hat{W}$,
$\check{\Omega}_{6,7}=-2\hat{R}_2+(-\hat{N}_1+\hat{N}_2+\hat{N}_3-\hat{N}_4)^T$,
$\check{\Omega}_{6,8}=6\hat{R}_2+2\hat{N}_2^T+2\hat{N}_4^T$,
$\check{\Omega}_{6,9}=6\hat{R}_2-2\hat{N}_3^T+2\hat{N}_4^T$,
$\check{\Omega}_{6,14}=(1-\bar{\beta})(1-\bar{\alpha})Y_1^T\mathbb{G}^TB_2^T$,
$\check{\Omega}_{7,7}=-4\hat{R}_2-\hat{Q}_2$,
$\check{\Omega}_{7,8}=-2\hat{N}_2^T+2\hat{N}_4^T$,
$\check{\Omega}_{7,9}=6\hat{R}_2$,
$\check{\Omega}_{8,8}=-12\hat{R}_2$,
$\check{\Omega}_{8,9}=-4\hat{N}_4^T$,
$\check{\Omega}_{9,9}=-12\hat{R}_2$,
$\check{\Omega}_{10,10}=-8\hat{R}_3+2(\hat{S}_1-\hat{S}_2+\hat{S}_3-\hat{S}_4)^T$,
$\check{\Omega}_{10,11}=-2\hat{R}_3+(-\hat{S}_1+\hat{S}_2+\hat{S}_3-\hat{S}_4)^T$,
$\check{\Omega}_{10,12}=6\hat{R}_3+2\hat{S}_2^T+2\hat{S}_4^T$,
$\check{\Omega}_{10,13}=6\hat{R}_3-2\hat{S}_3^T+2\hat{S}_4^T$,
$\check{\Omega}_{11,11}=-4\hat{R}_3-\hat{Q}_3$,
$\check{\Omega}_{11,12}=-2\hat{S}_2^T+2\hat{S}_4^T$,
$\check{\Omega}_{11,13}=6\hat{R}_3$,
$\check{\Omega}_{12,12}=-12\hat{R}_3$,
$\check{\Omega}_{12,13}=-4\hat{S}_4^T$,
$\check{\Omega}_{13,13}=-12\hat{R}_3$,
$\check{\Omega}_{14,14}=\hat{Q}_4+\hat{\tilde{Q}}_4+2A_2X_2+2B_2Y_2-\mathcal{E}^T\hat{Z}_2\mathcal{E}-\frac{\pi^2}{4}\mathcal{E}^T\hat{Z}_2\mathcal{E}$,
$\check{\Omega}_{14,15}=A_3X_2+\mathcal{E}^T\hat{Z}_2\mathcal{E}-\frac{\pi^2}{4}\mathcal{E}^T\hat{Z}_2\mathcal{E}$,
$\check{\Omega}_{14,17}=\frac{\pi^2}{4}\mathcal{E}^T\hat{Z}_2\mathcal{E}$,
$\check{\Omega}_{14,19}=(1-\bar{\beta})(1-\bar{\alpha})B_2\mathbb{G}Y_1$,
$\check{\Omega}_{14,20}=\bar{\beta}B_2\mathbb{G}Y_1$,
$\check{\Omega}_{14,21}=-B_2$,
$\check{\Omega}_{14,22}=B_3$,
$\check{\Omega}_{15,15}=-2\mathcal{E}^T\hat{Z}_2\mathcal{E}-2\frac{\pi^2}{4}\mathcal{E}^T\hat{Z}_2\mathcal{E}-(1-\lambda)\hat{\tilde{Q}}_4$,
$\check{\Omega}_{15,16}=\mathcal{E}^T\hat{Z}_2\mathcal{E}-\frac{\pi^2}{4}\mathcal{E}^T\hat{Z}_2\mathcal{E}$,
$\check{\Omega}_{15,17}=\frac{\pi^2}{4}\mathcal{E}^T\hat{Z}_2\mathcal{E}$,
$\check{\Omega}_{15,18}=\frac{\pi^2}{4}\mathcal{E}^T\hat{Z}_2\mathcal{E}$,
$\check{\Omega}_{16,16}=-\hat{Q}_4-\mathcal{E}^T\hat{Z}_2\mathcal{E}-\frac{\pi^2}{4}\mathcal{E}^T\hat{Z}_2\mathcal{E}$,
$\check{\Omega}_{16,18}=\frac{\pi^2}{4}\mathcal{E}^T\hat{Z}_2\mathcal{E}$,
$\check{\Omega}_{17,17}=-\pi^2\mathcal{E}^T\hat{Z}_2\mathcal{E}-\frac{1}{\bar{\theta}}\hat{Z}_1$,
$\check{\Omega}_{18,18}=-\pi^2\mathcal{E}^T\hat{Z}_2\mathcal{E}-\frac{1}{\bar{\theta}}\hat{Z}_1$,
$\check{\Omega}_{19,19}=-\hat{W}I$,
$\check{\Omega}_{20,20}=-\bar{\beta}I$,
$\check{\Omega}_{21,21}=-I$,
$\check{\Omega}_{22,22}=-D_1^T\mathcal{S}-\mathcal{R}+\gamma I$,
$\hat{\mathbb{U}}_{1}=\hat{M}$,
$\hat{\mathbb{U}}_{2}=\hat{N}$,
$\hat{\mathbb{U}}_{3}=\hat{S}$,\
$\hat{\hat{\Omega}}_1=
\Big[\zeta_2\hat{\Omega}_1^T \quad
d_2\hat{\Omega}_1^T \quad
\tau_2\hat{\Omega}_1^T \ \
\theta_2\hat{\Omega}_2^T \ \
\theta_2\hat{\Omega}_3^T \ \
\theta_2\hat{\Omega}_4^T \ \
\theta_2\hat{\Omega}_5^T \\
\sqrt{\epsilon}\hat{\Omega}_6^T \ \
\sqrt{\epsilon}\sigma\hat{\Omega}_7^T \ \
\sqrt{\epsilon}\delta\hat{\Omega}_8^T \ \
\sqrt{\epsilon}\sigma\delta\hat{\Omega}_9^T\ \
\hat{\Omega}_{10}^T \ \
\hat{\Omega}_{11}^T\Big]$,
$\hat{\hat{\Omega}}_2=\mbox{diag}\big\{-\tilde{\kappa}_1, -\tilde{\kappa}_2, -\tilde{\kappa}_3, -\tilde{\kappa}_4, -\tilde{\kappa}_4, -\tilde{\kappa}_4, -\tilde{\kappa}_4, -I, -I, -I,\\ -I, -I, -I\big\}$,\ \
$\hat{\Omega}_1=
\big[
A_1X_1^T \ 0_{12n} \ B_1C_2X_2^T \ 0_{7n} \ B_1D_2
\big]$,\
$\hat{\Omega}_2=
\big[
0 \ \bar{\alpha}\bar{\beta}_1\bar{\Upsilon}_1 \ 0_{3n} \ \bar{\alpha}_1\bar{\beta}_1\bar{\Upsilon}_1 \ 0_{7n} \  A_2X_2^T+\bar{\Upsilon}_2 \ A_3X_2^T \ 0_{3n} \  \bar{\alpha}_1\bar{\beta}_1\bar{\Upsilon}_1 \ \bar{\beta}\bar{\Upsilon}_1 \ -B_2 \ B_3
\big]$,\
$\hat{\Omega}_3=
\big[
0 \ \ \sigma\bar{\beta}_1\bar{\Upsilon}_1 \ \ 0_{3n} \ \ \sigma\bar{\beta}_1\bar{\Upsilon}_1 \ \ 0_{12n} \ \ \sigma\bar{\beta}_1\bar{\Upsilon}_1 \ \  0_{3n}
\big]$,\
$\hat{\Omega}_4=
\big[
0 \ \ -\delta\bar{\alpha}_1\bar{\Upsilon}_1 \ \ 0_{3n} \ \ -\delta\bar{\alpha}_1\bar{\Upsilon}_1  \ \ 0_{12n} \ \ -\delta\bar{\alpha}_1\bar{\Upsilon}_1  \ \ \delta\bar{\alpha}_1\bar{\Upsilon}_1 \ \ 0_{2n}
\big]$,\
$\hat{\Omega}_5=
\big[
0 \ \ \sigma\delta \bar{\Upsilon}_1 \ \ 0_{3n} \ \ -\sigma\delta \bar{\Upsilon}_1 \ \ 0_{12n} \ \ -\sigma\delta \bar{\Upsilon}_1 \ \  0_{3n}
\big]$,\
$\hat{\Omega}_6=
\big[
0 \ \bar{\beta}_1\bar{\alpha}\mathbb{G}Y_1 \ 0_{3n} \ \bar{\beta}_1\bar{\alpha}_1\mathbb{G}Y_1 \ 0_{7n} \ K_2 \ 0_{4n} \ \bar{\beta}_1\bar{\alpha}_1\mathbb{G}Y_1
\ \bar{\beta}\mathbb{G}Y_1 \ 0_{2n}
\big]$,
$\hat{\Omega}_7=
\big[
0 \ \ \bar{\beta}_1\mathbb{G}Y_1 \ \ 0_{3n} \ \ -\bar{\beta}_1\mathbb{G}Y_1 \ \ 0_{12n} \ \ -\bar{\beta}_1\mathbb{G}Y_1  \ \ 0_{3n}
\big]$,
$\hat{\Omega}_8=
\big[
0 \ \ -\bar{\alpha}\mathbb{G}Y_1 \ \ 0_{3n} \ \ -\bar{\alpha}_1\mathbb{G}Y_1 \ \ 0_{12n} \ \ -\bar{\alpha}_1\mathbb{G}Y_1 \ \ \mathbb{G}Y_1 \ \ 0_{2n}
\big]$,\
$\hat{\Omega}_9=
\big[
0 \ \ -\mathbb{G}Y_1 \ \ 0_{3n} \ \ \mathbb{G}Y_1 \ \ 0_{12n} \ \ \mathbb{G}Y_1 \ \  0_{3n}
\big]$,
$\hat{\Omega}_{10}=
\big[
0_{9n} \ \ \sqrt{\bar{\beta}}FX_1^T \ \ 0_{12n}
\big]$,\
$\hat{\Omega}_{11}=
\big[
\sqrt{\mathcal{Q}}C_1X_1^T \ \ 0_{20n} \ \ \sqrt{\mathcal{Q}}D_1
\big]$,\
$\bar{\Upsilon}_1=B_2\mathbb{G}Y_1$, $\bar{\Upsilon}_2=B_2Y_2$,\
$\tilde{\kappa}_1=-2\epsilon_1X_1+\epsilon_1^2\hat{R}_1$,
$\tilde{\kappa}_2=-2\epsilon_2X_1+\epsilon_2^2\hat{R}_2$,
$\tilde{\kappa}_3=-2\epsilon_3X_1+\epsilon_3^2\hat{R}_3$,
$\tilde{\kappa}_4=-2\epsilon_1X_2+\epsilon_4^2\hat{Z}_2$,
and the rest of parameters are zero.

\subsection{Proof of Theorem~\ref{p1-thm1}}\label{sec:app:2}
\begin{proof}
In order to prove that the nominal system \eqref{p1-eq14} with $w=0$ is admissible, we first prove that the system \eqref{p1-eq14} is mean-square asymptotically stable. For this purpose, we design a Lyapunov-Krasovskii functional candidate in the following form:
$V=\sum_{i=1}^{4}V_i$,
where
{\begin{align*}
V_1=& \bar{x}_1^TP_1\bar{x}_1+\bar{x}_2^T\mathcal{E}^TP_2\bar{x}_2, \nonumber \\
V_2=&\int_{t-\zeta_2}^{t}\bar{x}_1^T(s)Q_1\bar{x}_1(s)ds +\int_{t-d_2}^{t}\bar{x}_1^T(s)Q_2\bar{x}_1(s)ds\\&
+\int_{t-\tau_2}^{t}\bar{x}_1^T(s)Q_3\bar{x}_1(s)ds
+\int_{t-\theta(t)}^{t}\bar{x}_2^T(s)\tilde{Q}_4\bar{x}_2(s)ds\\&
+\int_{t-\bar{\theta}}^{t}\bar{x}_2^T(s)Q_4\bar{x}_2(s)ds, \nonumber \\
V_3=&\zeta_2\int_{-\zeta_2}^{0}\int_{t+\sigma}^{t}\dot{\bar{x}}_1^T(s)R_1\dot{\bar{x}}_1(s)dsd\sigma
\\&+d_2\int_{-d_2}^{0}\int_{t+\sigma}^{t}\dot{\bar{x}}_1^T(s)R_2\dot{\bar{x}}_1(s)dsd\sigma\\&
+\tau_2\int_{-\tau_2}^{0}\int_{t+\sigma}^{t}\dot{\bar{x}}_1^T(s)R_3\dot{\bar{x}}_1(s)dsd\sigma,\nonumber\\
V_4=&
\int_{-\bar{\theta}}^{0}\int_{t+\sigma}^{t}\bar{x}_2^T(s)Z_1\bar{x}_2(s)dsd\sigma \\&+
\bar{\theta}\int_{-\bar{\theta}}^{0}\int_{t+\sigma}^{t}\dot{\bar{x}}_2^T(s)\mathcal{E}^TZ_2\mathcal{E}\dot{\bar{x}}_2(s)dsd\sigma.
\end{align*}}
Next, we evaluate the time derivatives  $\dot{V}$ along the  trajectories of the closed-loop singular NCCSs \eqref{p1-eq14} and taking the mathematical expectation, we have
{\begin{align}
\label{lkf1}\mathbb{E}[\dot{V}_1]&=\mathbb{E}[2 \bar{x}_{1}^TP_{1}\dot{\bar{x}}_1+2\bar{x}_{2}^TP_{2}\mathcal{E}\dot{\bar{x}}_2],\\
\label{lkf2}\mathbb{E}[\dot V_2]&= \mathbb{E}[\bar{x}_1^T(Q_{1}+Q_{2}+Q_{3})\bar{x}_1
- \bar{x}_1^T(t-\zeta_2)Q_{1}\bar{x}_1(t-\zeta_2)\nonumber\\&
-\bar{x}_1^T(t-d_2)Q_{2}\bar{x}_1(t-d_2)
-\bar{x}_1^T(t-\tau_2)Q_{3}\bar{x}_1(t-\tau_2)\nonumber\\&
+\bar{x}_2^T(t)(\tilde{Q}_{4}+{Q}_{4})\bar{x}_2(t)
-\bar{x}_2^T(t-\bar{\theta})Q_{4}\bar{x}_2(t-\bar{\theta})\nonumber\\&
-(1-\dot{\theta}(t))\bar{x}_2^T(t-\theta(t))\tilde{Q}_{4}\bar{x}_2(t-\theta(t))],\\
\label{lkf3}\mathbb{E}[\dot V_3]&=\mathbb{E}[\dot{\bar{x}}_1^T(\zeta_2^2R_1+d_2^2R_2+\tau_2^2R_3)\dot{\bar{x}}_1
\nonumber\\&-\zeta_2\int^{t}_{t-\zeta_2}\dot{\bar{x}}_1^T(s)R_1 \dot{\bar{x}}_1(s) ds
-d_2\int^{t}_{t-d_2}\dot{\bar{x}}_1^T(s)R_2 \dot{\bar{x}}_1(s) ds\nonumber\\&
-\tau_2\int^{t}_{t-\tau_2}\dot{\bar{x}}_1^T(s)R_3 \dot{\bar{x}}_1(s) ds],
\\
\label{lkf4}\mathbb{E}[\dot V_4]&=
\mathbb{E}[\bar{\theta} \bar{x}_2^TZ_1\bar{x}_2 - \int^{t}_{t-\bar{\theta}}\bar{x}_2^T(s)Z_1\bar{x}_2(s) ds\nonumber\\&
+ \bar{\theta}^2 \dot{\bar{x}}_2^T\mathcal{E}^TZ_2\mathcal{E} \dot{\bar{x}}_2
-\bar{\theta}\int^{t}_{t-\bar{\theta}}\dot{\bar{x}}_2^T(s)\mathcal{E}^TZ_2\mathcal{E} \dot{\bar{x}}_2(s) ds].
\end{align}}
{According to Lemma \ref{p1-writ1}, the term in \eqref{lkf3} can be written as
{\begin{align}
\label{p1-eq22}
-&\sum\limits_{i=1}^{3}r_i\int_{t-r_i(t)}^{t}
\dot{\bar{x}}_1^T(s)R_i\dot{\bar{x}}_1(s)ds \nonumber\\
&\leq -\frac{r_i}{r_i(t)}
\begin{bmatrix}
\Pi_{i1} \\ \Pi_{i2}
\end{bmatrix}^T
\begin{bmatrix}
R_i & 0 \\ \ast & 3R_i
\end{bmatrix}
\begin{bmatrix}
\Pi_{i1} \\ \Pi_{i2}
\end{bmatrix},\\
\label{p1-eq23}
-&\sum\limits_{i=1}^{3}r_i\int_{t-r_i}^{t-r_i(t)}
\dot{\bar{x}}_1^T(s)R_i\dot{\bar{x}}_1(s)ds\nonumber\\
&\leq -\frac{r_i}{r_i-r_i(t)}
\begin{bmatrix}
\Pi_{i3} \\ \Pi_{i4}
\end{bmatrix}^T
\begin{bmatrix}
R_i & 0 \\ \ast & 3R_i
\end{bmatrix}
\begin{bmatrix}
\Pi_{i3} \\ \Pi_{i4}
\end{bmatrix},
\end{align}}
where
{$\Pi_{i1}=\bar{x}_1-e_1\chi_i,$
$\Pi_{i2}=\bar{x}_2+e_1\chi_i-2e_3\chi_i,$
$\Pi_{i3}=e_1\chi_i-e_2\chi_i,$
$\Pi_{i4}=e_1\chi_i+e_2\chi_i-2e_4\chi_i$, $(i=1, 2, 3)$,
$\chi_1=\big[
\bar{x}_1^T(t-\zeta(t)) \ \ \bar{x}_1^T(t-\zeta_2) \ \ \frac{1}{\zeta(t)}\int_{t-\zeta(t)}^{t}\bar{x}_1^T(s)ds \ \
\frac{1}{\zeta_2-\zeta(t)}\int_{t-\zeta_2}^{t-\zeta(t)}\bar{x}_1^T(s)ds
\big]$,
$\chi_2=
\big[
\bar{x}_1^T(t-d(t)) \ \ \bar{x}_1^T(t-d_2) \ \
\frac{1}{d(t)}\int_{t-d(t)}^{t}\bar{x}_1^T(s)ds \ \
\frac{1}{d_2-d(t)}\int_{t-d_2}^{t-d(t)}\bar{x}_1^T(s)ds
\big]$,
$\chi_3=
\big[
\bar{x}_1^T(t-\tau(t)) \ \ \bar{x}_1^T(t-\tau_2) \ \ \frac{1}{\tau(t)}\int_{t-\tau(t)}^{t}\bar{x}_1^T(s)ds \ \
\frac{1}{\tau_2-\tau(t)}\int_{t-\tau_2}^{t-\tau(t)}\bar{x}_1^T(s)ds
\big]$,}
and $e_i\ (i=1, 2, 3, 4)$ are compatible row-block matrices with $i^{th}$ block of an identify matrix.\\
Now, by applying Lemma \ref{p1-writ2} in \eqref{p1-eq22} and \eqref{p1-eq23} can be written as
{\begin{align}\label{p1-eq24}
-&\sum\limits_{i=1}^{3}r_i\int_{t-r_i}^{t}\dot{\bar{x}}_1^T(s)R_i \dot{\bar{x}}_1(s) ds
\nonumber\\
&\leq
-\sum\limits_{i=1}^{3}
\begin{bmatrix}
\Pi_{i1} \\ \Pi_{i2} \\ \Pi_{i3} \\ \Pi_{i4}
\end{bmatrix}^T
\begin{bmatrix}
R_i & 0 & \mathbb{U}_{i1}^T & \mathbb{U}_{i3}^T \\
\ast & 3R_i & \mathbb{U}_{i2}^T  & \mathbb{U}_{i4}^T  \\
\ast & \ast &  R_i & 0 \\
\ast &  \ast &  \ast &  3R_i
\end{bmatrix}
\begin{bmatrix}
\Pi_{i1} \\ \Pi_{i2} \\ \Pi_{i3} \\ \Pi_{i4}
\end{bmatrix},
\end{align}}
where $\mathbb{U}_{1}=M$, $\mathbb{U}_{2}=N$, $\mathbb{U}_{3}=S$.\\
By using $0\leq \theta(t)\leq \bar{\theta}$ to the integral terms in $\eqref{lkf4}$, we can get
{\begin{align}
\label{p1-eq25}-\int_{t-\bar{\theta}}^{t}\bar{x}_2^{T}(s)Z_1\bar{x}_2(s)ds&=-\int_{t-\bar{\theta}}^{t-\theta(t)}\bar{x}_2^{T}(s)Z_1\bar{x}_2(s)ds
\nonumber\\&-\int_{t-\theta(t)}^{t}\bar{x}_2^{T}(s)Z_1\bar{x}_2(s)ds, \\
\label{p1-eq26}- \int_{t-\bar{\theta}}^{t}\dot{\bar{x}}_2^T(s)\mathcal{E}^TZ_2\mathcal{E} \dot{\bar{x}}_2(s) ds &= - \int_{t-\bar{\theta}}^{t-\theta(t)}\dot{\bar{x}}_2^T(s)\mathcal{E}^TZ_2\mathcal{E} \dot{\bar{x}}_2(s) ds
\nonumber\\&- \int_{t-\theta(t)}^{t}\dot{\bar{x}}_2^T(s)\mathcal{E}^TZ_2\mathcal{E} \dot{\bar{x}}_2(s) ds.
\end{align}}
Applying Lemma \ref{jnl} to the integral term in  \eqref{p1-eq25}, we obtain
{\begin{align}\label{p1-eq27}
-\int_{t-\bar{\theta}}^{t}\bar{x}_2^{T}(s)Z_1\bar{x}_2(s)ds & \leq  \frac{-1}{\bar{\theta}}\begin{bmatrix} \int_{t-\bar{\theta}}^{t-\theta(t)}\bar{x}_2(s)ds \\ \int_{t-\theta(t)}^{t}\bar{x}_2(s)ds \end{bmatrix}^T \begin{bmatrix} Z_1 & 0 \\ * & Z_1  \end{bmatrix} \nonumber\\ &\times
\begin{bmatrix} \int_{t-\bar{\theta}}^{t-\theta(t)}\bar{x}_2(s)ds \\ \int_{t-\theta(t)}^{t}\bar{x}_2(s)ds \end{bmatrix}.
\end{align}}
In addition, utilizing Lemma \ref{wrtl} to the integral terms of right hand side in \eqref{p1-eq26}, we can get the following inequalities
{\begin{align}
\label{p1-eq28}- &\int_{t-\bar{\theta}}^{t-\theta(t)}\dot{\bar{x}}_2^T(s)\mathcal{E}^TZ_2\mathcal{E} \dot{\bar{x}}_2(s) ds \nonumber\\& \leq  \frac{-1}{\bar{\theta}}\begin{bmatrix} \bar{x}_2(t-\theta(t)) \\ \bar{x}_2(t-\theta_2) \\ \frac{1}{\bar{\theta}}\int_{t-\bar{\theta}}^{t-\theta(t)}\bar{x}_2(s)ds\end{bmatrix}^T\mathcal{W} \begin{bmatrix} \bar{x}_2(t-\theta(t)) \\ \bar{x}_2(t-\bar{\theta}) \\ \frac{1}{\bar{\theta}}\int_{t-\bar{\theta}}^{t-\theta(t)}\bar{x}_2(s)ds\end{bmatrix},\\
\label{p1-eq29}- &\int_{t-\theta(t)}^{t}\dot{\bar{x}}_2^T(s)E^TZ_2E \dot{\bar{x}}_2(s) ds
\nonumber\\& \leq
\frac{-1}{\bar{\theta}}\begin{bmatrix} \bar{x}_2\\ \bar{x}_2(t-\theta(t)) \\ \frac{1}{\bar{\theta}}\int_{t-\theta(t)}^{t}\bar{x}_2(s)ds\end{bmatrix}^T\mathcal{W} \begin{bmatrix} \bar{x}_2 \\ \bar{x}_2(t-\theta(t)) \\ \frac{1}{\bar{\theta}}\int_{t-\theta(t)}^{t}\bar{x}_2(s)ds\end{bmatrix},
\end{align}}
where the matrix $\mathcal{W}$ is given as
\begin{eqnarray}
\mathcal{W} =
\begin{bmatrix}
\mathcal{E}^T Z_2 \mathcal{E}  & -\mathcal{E}^T Z_2\mathcal{E}  & 0 \\
\ast & \mathcal{E}^T Z_2 \mathcal{E} & 0\\
\ast & \ast & 0
\end{bmatrix} \nonumber \\
+\begin{bmatrix}
\frac{\pi^2}{4}\mathcal{E}^T Z_2 \mathcal{E} & \frac{\pi^2}{4}\mathcal{E}^T Z_2 \mathcal{E} & -\frac{\pi^2}{2}\mathcal{E}^T Z_2\mathcal{E} \\
\ast &\frac{\pi^2}{4}\mathcal{E}^T Z_2\mathcal{E} &-\frac{\pi^2}{2} \mathcal{E}^T Z_2 \mathcal{E} \\
\ast & \ast & \pi^2 \mathcal{E}^T Z_2 \mathcal{E}
\end{bmatrix}.
\end{eqnarray}
From \eqref{lkf1}, we can have
\begin{align*}
2\mathbb{E}\{\bar{x}_2^TP_2\mathcal{E}\dot{\bar{x}}_2\}=2\mathbb{E}\big\{\bar{x}_2^TP_2\big[\bar{\mathcal{A}}_0+\bar{\mathcal{A}}_1\big]\big\},
\end{align*}
where
${\mathcal{A}}_0=(A_2+B_2\mathcal{K}_2)\bar{x}_2+A_3\bar{x}_2(t-\theta(t))-B_2\Psi(u_2)+B_3w,\
\bar{\mathcal{A}}_1=(1-\bar{\beta})B_2\mathbb{G}\mathcal{K}_1\bar{x}_1(t-\zeta(t))+(1-\bar{\beta})(1-\bar{\alpha})B_2\mathbb{G}\mathcal{K}_1\bar{x}_1(t-d(t))
+(1-\bar{\beta})(1-\bar{\alpha})B_2\mathbb{G}\mathcal{K}_1e_k(t)+\bar{\beta}B_2\mathbb{G}\mathcal{K}_1f(\bar{x}_1(t-\tau(t))).
$\\}
Notice that
{\begin{align*}
\mathcal{E}\dot{x}_2(t)&=\mathcal{A}_0+(\alpha(t)-\bar{\alpha})\mathcal{A}_1+(\beta(t)-\bar{\beta})\mathcal{A}_2
\\&+(\alpha(t)-\bar{\alpha})(\beta(t)-\bar{\beta})\mathcal{A}_3,
\end{align*}}
where $
\mathcal{A}_0=\bar{\mathcal{A}}_0+\bar{\beta}B_2\mathbb{G}\mathcal{K}_1f(\bar{x}_1(t-\tau(t)))
+(1-\bar{\beta})\bar{\mathcal{A}}_2,\
\mathcal{A}_1=(1-\bar{\beta})\bar{\mathcal{A}}_3,\
\mathcal{A}_2=B_2\mathbb{G}\mathcal{K}_1f(\bar{x}_1(t-\tau(t)))-\bar{\mathcal{A}}_2,\
\mathcal{A}_3=-\bar{\mathcal{A}}_3,
\bar{\mathcal{A}}_2=\bar{\alpha}B_2\mathbb{G}\mathcal{K}_1\bar{x}_1(t-\zeta(t))+(1-\bar{\alpha})B_2\mathbb{G}\mathcal{K}_1\bar{x}_1(t-d(t))+(1-\bar{\alpha})B_2\mathbb{G}\mathcal{K}_1e_k(t),\
\bar{\mathcal{A}}_3=B_2\mathbb{G}\mathcal{K}_1\bar{x}_1(t-\zeta(t))-B_2\mathbb{G}\mathcal{K}_1\bar{x}_1(t-d(t))-B_2\mathbb{G}\mathcal{K}_1e_k(t).
$\\
In addition, it is clear that
{\begin{align}\label{p1-eq30}
\mathbb{E}\{\bar{\theta}^2 \dot{\bar{x}}_2^T&\mathcal{E}^TZ_2\mathcal{E} \dot{\bar{x}}_2\}=
\bar{\theta}^2\mathcal{A}_0^TZ_2\mathcal{A}_0
+\bar{\theta}^2\sigma^2\mathcal{A}_1^TZ_2\mathcal{A}_1\nonumber\\&
+\bar{\theta}^2\delta^2\mathcal{A}_2^TZ_2\mathcal{A}_2
+\bar{\theta}^2\sigma^2\delta^2\mathcal{A}_3^TZ_2\mathcal{A}_3.
\end{align}}
From the inequality \eqref{p1-eq9} and  Assumption 1 in \cite{base}, we can get the following inequalities
{\begin{align}
\label{p1-eq31}\mu \bar{x}_1^T(t-d(t))W\bar{x}_1(t-d(t))-e_k^TWe_k&\geq 0,\
\end{align}
and
\begin{align}
\label{p1-eq32}
&\bar{\beta}\bar{x}_1^T(t-\tau(t))F^TF\bar{x}_1(t-\tau(t))\nonumber\\&
-\bar{\beta}f^T(\bar{x}_1(t-\tau(t)))f(\bar{x}_1(t-\tau(t)))\geq 0.
\end{align}}
Note that
{\begin{align*}
u_2&=\bar{\mathcal{B}}_0+(\alpha(t)-\bar{\alpha})\bar{\mathcal{B}}_1
+(\beta(t)-\bar{\beta})\bar{\mathcal{B}}_2\\&
+(\alpha(t)-\bar{\alpha})(\beta(t)-\bar{\beta})\bar{\mathcal{B}}_3,
\end{align*}}
where
$
\bar{\mathcal{B}}_0=(1-\bar{\beta})\bar{\alpha}\mathbb{G}\mathcal{K}_1\bar{x}_1(t-\zeta(t))
+(1-\bar{\beta})(1-\bar{\alpha})\mathbb{G}\mathcal{K}_1\bar{x}_1(t-d(t))
+\mathcal{K}_2\bar{x}_2(t)\bar{\beta}\mathbb{G}\mathcal{K}_1f(\bar{x}_1(t-\tau(t)))+(1-\bar{\beta})(1-\bar{\alpha})\mathbb{G}\mathcal{K}_1e_k,\
\bar{\mathcal{B}}_1=(1-\bar{\beta})\mathbb{G}\mathcal{K}_1\bar{x}_1(t-\zeta(t))-(1-\bar{\beta})\mathbb{G}\mathcal{K}_1\bar{x}_1(t-d(t))-(1-\bar{\beta})\mathbb{G}\mathcal{K}_1e_k,\
\bar{\mathcal{B}}_2=-\bar{\alpha}\mathbb{G}\mathcal{K}_1\bar{x}_1(t-\zeta(t))-(1-\bar{\alpha})\mathbb{G}\mathcal{K}_1\bar{x}_1(t-d(t))+\mathbb{G}\mathcal{K}_1f(\bar{x}_1(t-\tau(t)))-(1-\bar{\alpha})\mathbb{G}\mathcal{K}_1e_k,\
\bar{\mathcal{B}}_3=-\mathbb{G}\mathcal{K}_1\bar{x}_1(t-\zeta(t))+\mathbb{G}\mathcal{K}_1\bar{x}_1(t-d(t)+\mathbb{G}\mathcal{K}_1e_k.
$\\
From \eqref{p1-eq5}, it is easy to obtain that
{\begin{align}\label{p1-eq33}
\mathbb{E}\{\epsilon u_2^Tu_2\}&=
\epsilon \bar{\mathcal{B}}_0^T\bar{\mathcal{B}}_0+\sigma^2\epsilon\bar{\mathcal{B}}_1^T\bar{\mathcal{B}}_1\nonumber\\&
+\delta^2\epsilon\bar{\mathcal{B}}_2^T\bar{\mathcal{B}}_2+\sigma^2\delta^2\epsilon\bar{\mathcal{B}}_3^T\bar{\mathcal{B}}_3.
\end{align}}
Now, by combining \eqref{p1-eq5}, \eqref{lkf1}-\eqref{p1-eq33} and Schur complement with $w=0$, we get
\begin{align}\label{p1-eq34}
\dot{V}(t)\leq \eta^T(t)\hat{\Omega}
\eta(t)
\end{align}
where
$\eta(t)=\big[ \bar{x}_1^T\ \ \chi_1^T  \ \ \chi_2^T \ \ \chi_3^T \ \ \bar{x}_2^T \ \ \bar{x}_2^T(t-\theta(t))\ \ \bar{x}_2^T(t-\bar{\theta})  \ \
\int_{t-\theta(t)}^{t}\bar{x}_2^T(s)ds
\ \
\int_{t-\bar{\theta}}^{t-\theta(t)}\bar{x}_2^T(s)ds  \ \
 e_k^T
\ \ f^T(\bar{x}_1(t-\tau(t)))\ \ \Psi^T(u_2) \big]^T.$
$\hat{\Omega}=[{\Omega}]_{21\times 21}
+\zeta_2^2{\Omega}_1^TR_1{\Omega}_1
+d_2^2{\Omega}_1^TR_2{\Omega}_1
+\tau_2^2{\Omega}_1^TR_3{\Omega}_1
+\bar{\theta}^2{\Omega}_2^TZ_2{\Omega}_2
+\bar{\theta}^2{\Omega}_3^TZ_2{\Omega}_3
+\bar{\theta}^2{\Omega}_4^TZ_2{\Omega}_4
+\bar{\theta}^2{\Omega}_5^TZ_2{\Omega}_5
+\epsilon{\Omega}_6^T{\Omega}_6
+\epsilon\sigma^2{\Omega}_7^T{\Omega}_7
+\epsilon\delta^2{\Omega}_8^T{\Omega}_8
+\epsilon\sigma^2\delta^2{\Omega}_9^T{\Omega}_9
+{\Omega}_{10}^T{\Omega}_{10}.$
Then, by using Lemma \ref{p1-schur} to the inequality \eqref{p1-eq34}, it is easy to get
that
\begin{align}\label{p1-eq35}
\tilde{\Omega}=
\begin{bmatrix}
[{\Omega}]_{21\times 21} & \tilde{\tilde{\Omega}}_1\\
\ast & \tilde{\tilde{\Omega}}_2
\end{bmatrix}<0,
\end{align}
where $\tilde{\Omega}$ elements are
{$\Omega_{1,1}=2P_1A_1+Q_1+Q_2+Q_3-4R_1-4R_2-4R_3$,
$\Omega_{1,2}=-2R_1-(M_1+M_2+M_3+M_4)^T$,
$\Omega_{1,3}=(M_1+M_2-M_3-M_4)^T$,
$\Omega_{1,4}=6R_1$,
$\Omega_{1,5}=2(M_3+M_4)^T$,
$\Omega_{1,6}=-2R_2-(N_1+N_2+N_3+N_4)^T$,
$\Omega_{1,7}=(N_1+N_2-N_3-N_4)^T$,
$\Omega_{1,8}=6R_2$,
$\Omega_{1,9}=2(N_3+N_4)^T$,
$\Omega_{1,10}=-2R_3-(S_1+S_2+S_3+S_4)^T$,
$\Omega_{1,11}=(S_1+S_2-S_3-S_4)^T$,
$\Omega_{1,12}=6R_3$,
$\Omega_{1,13}=2(S_3+S_4)^T$,
$\Omega_{1,14}=P_1B_1C_2$,
$\Omega_{2,2}=-8R_1+2(M_1-M_2+M_3-M_4)^T$,
$\Omega_{2,3}=-2R_1+(-M_1+M_2+M_3-M_4)^T$,
$\Omega_{2,4}=6R_1+2(M_2+M_4)^T$,
$\Omega_{2,5}=6R_1-2M_3^T+2M_4^T$,
$\Omega_{2,14}=(1-\bar{\beta})\bar{\alpha}\mathcal{K}_1^T\mathbb{G}^TB_2^TP_2$,
$\Omega_{3,3}=-4R_1-Q_1$,
$\Omega_{3,4}=-2M_2^T+2M_4^T$,
$\Omega_{3,4}=6R_1$,
$\Omega_{4,4}=-12R_1$,
$\Omega_{4,5}=-4M_4^T$,
$\Omega_{5,5}=-12R_1$,
$\Omega_{6,6}=-8R_2+2(N_1-N_2+N_3-N_4)^T+\mu W$,
$\Omega_{6,7}=-2R_2+(-N_1+N_2+N_3-N_4)^T$,
$\Omega_{6,8}=6R_2+2N_2^T+2N_4^T$,
$\Omega_{6,9}=6R_2-2N_3^T+2N_4^T$,
$\Omega_{6,14}=(1-\bar{\beta})(1-\bar{\alpha})\mathcal{K}_1^T\mathbb{G}^TB_2^TP_2$,
$\Omega_{7,7}=-4R_2-Q_2$,
$\Omega_{7,8}=-2N_2^T+2N_4^T$,
$\Omega_{7,9}=6R_2$,
$\Omega_{8,8}=-12R_2$,
$\Omega_{8,9}=-4N_4^T$,
$\Omega_{9,9}=-12R_2$,
$\Omega_{10,10}=-8R_3+2(S_1-S_2+S_3-S_4)^T$,
$\Omega_{10,11}=-2R_3+(-S_1+S_2+S_3-S_4)^T$,
$\Omega_{10,12}=6R_3+2S_2^T+2S_4^T$,
$\Omega_{10,13}=6R_3-2S_3^T+2S_4^T$,
$\Omega_{11,11}=-4R_3-Q_3$,
$\Omega_{11,12}=-2S_2^T+2S_4^T$,
$\Omega_{11,13}=6R_3$,
$\Omega_{12,12}=-12R_3$,
$\Omega_{12,13}=-4S_4^T$,
$\Omega_{13,13}=-12R_3$,
$\Omega_{14,14}=Q_4+\tilde{Q}_4+2P_2A_2+2P_2B_2\mathcal{K}_2-\mathcal{E}^TZ_2\mathcal{E}-\frac{\pi^2}{4}\mathcal{E}^TZ_2\mathcal{E}$,
$\Omega_{14,15}=P_2A_3+\mathcal{E}^TZ_2\mathcal{E}-\frac{\pi^2}{4}\mathcal{E}^TZ_2\mathcal{E}$,
$\Omega_{14,17}=\frac{\pi^2}{4}\mathcal{E}^TZ_2\mathcal{E}$,
$\Omega_{14,19}=(1-\bar{\beta})(1-\bar{\alpha})P_2B_2\mathbb{G}\mathcal{K}_1$,
$\Omega_{14,20}=\bar{\beta}P_2B_2\mathbb{G}\mathcal{K}_1$,
$\Omega_{14,21}=-P_2B_2$,
$\Omega_{15,15}=-2\mathcal{E}^TZ_2\mathcal{E}-2\frac{\pi^2}{4}\mathcal{E}^TZ_2\mathcal{E}-(1-\lambda)\hat{Q}_4$,
$\Omega_{15,16}=\mathcal{E}^TZ_2\mathcal{E}-\frac{\pi^2}{4}\mathcal{E}^TZ_2\mathcal{E}$,
$\Omega_{15,17}=\frac{\pi^2}{4}\mathcal{E}^TZ_2\mathcal{E}$,
$\Omega_{15,18}=\frac{\pi^2}{4}\mathcal{E}^TZ_2\mathcal{E}$,
$\Omega_{16,16}=-Q_4-\mathcal{E}^TZ_2\mathcal{E}-\frac{\pi^2}{4}\mathcal{E}^TZ_2\mathcal{E}$,
$\Omega_{16,18}=\frac{\pi^2}{4}E^TZ_2E$,
$\Omega_{17,17}=-\pi^2E^TZ_2E-\frac{1}{\bar{\theta}}Z_1$,
$\Omega_{18,18}=-\pi^2E^TZ_2E-\frac{1}{\bar{\theta}}Z_1$,
$\Omega_{19,19}=-WI$,
$\Omega_{20,20}=-\bar{\beta}I$,
$\Omega_{21,21}=-I$,
$\tilde{\tilde{\Omega}}_1=
\big[\zeta_2{\Omega}_1^T \ \
d_2{\Omega}_1^T \ \
\tau_2{\Omega}_1^T \ \
\theta_2{\Omega}_2^T \ \
\theta_2{\Omega}_3^T \ \
\theta_2{\Omega}_4^T \ \
\theta_2{\Omega}_5^T \ \
\sqrt{\epsilon}{\Omega}_6^T \\
\sqrt{\epsilon}\sigma{\Omega}_7^T \ \
\sqrt{\epsilon}\delta{\Omega}_8^T \ \
\sqrt{\epsilon}\sigma\delta{\Omega}_9^T\ \
{\Omega}_{10}^T\big]$,\
$\tilde{\tilde{\Omega}}_2=\mbox{diag}\big\{-\kappa_1, -\kappa_2, -\kappa_3, -\kappa_4, -\kappa_4, -\kappa_4, -\kappa_4, -I, -I, -I, -I, -I\big\}$,
$\Omega_1=
\big[
P_1A_1 \ \ 0_{12n} \ \ P_1B_1C_2 \ \ 0_{7n}
\big]$, \
$\Omega_2=
\big[
0 \ \bar{\alpha}\bar{\beta}_1\Upsilon_1 \ 0_{3n} \ \bar{\alpha}_1\bar{\beta}_1\Upsilon_1 \ 0_{7n} \  P_2A_2+\Upsilon_2 \ P_2A_3 \ 0_{3n} \  \bar{\alpha}_1\bar{\beta}_1\Upsilon_1 \ \bar{\beta}\Upsilon_1 \ -P_2B_2
\big]$,
$\Omega_3=
\big[
0 \ \ \sigma\bar{\beta}_1\Upsilon_1 \ \ 0_{3n} \ \ \sigma\bar{\beta}_1\Upsilon_1 \ \ 0_{12n} \ \ \sigma\bar{\beta}_1\Upsilon_1 \ \  0_{2n}
\big]$,\
$\Omega_4=
\big[
0 \ \ -\delta\bar{\alpha}_1\Upsilon_1 \ \ 0_{3n} \ \ -\delta\bar{\alpha}_1\Upsilon_1  \ \ 0_{12n} \ \ -\delta\bar{\alpha}_1\Upsilon_1  \ \ \delta\bar{\alpha}_1\Upsilon_1 \ \ 0
\big]$, \ \
$\Omega_5=
\big[
0 \ \ \sigma\delta \Upsilon_1 \ \ 0_{3n} \ \ -\sigma\delta \Upsilon_1 \ \ 0_{12n} \ \ -\sigma\delta \Upsilon_1 \ \  0_{2n}
\big]$,\
${\Omega}_6=
\big[
0 \ \bar{\beta}_1\bar{\alpha}\mathbb{G}\mathcal{K}_1 \ 0_{3n} \ \bar{\beta}_1\bar{\alpha}_1\mathbb{G}\mathcal{K}_1 \ 0_{7n} \ K_2 \ 0_{4n} \ \bar{\beta}_1\bar{\alpha}_1\mathbb{G}\mathcal{K}_1 \ \bar{\beta}\mathbb{G}\mathcal{K}_1 \ 0
\big]$,
${\Omega}_7=
\big[
0 \ \ \bar{\beta}_1\mathbb{G}\mathcal{K}_1 \ \ 0_{3n} \ \ -\bar{\beta}_1\mathbb{G}\mathcal{K}_1 \ \ 0_{12n} \ \ -\bar{\beta}_1\mathbb{G}\mathcal{K}_1  \ \ 0_{2n}
\big]$,\
${\Omega}_8=
\big[
0 \ \ -\bar{\alpha}\mathbb{G}\mathcal{K}_1 \ \ 0_{3n} \ \ -\bar{\alpha}_1\mathbb{G}\mathcal{K}_1 \ \ 0_{12n} \ \ -\bar{\alpha}_1\mathbb{G}\mathcal{K}_1 \ \ \mathbb{G}\mathcal{K}_1\ \ 0
\big]$,\
${\Omega}_9=
\big[
0 \ \ -\mathbb{G}\mathcal{K}_1 \ \ 0_{3n} \ \ \mathbb{G}\mathcal{K}_1 \ \ 0_{12n} \ \ \mathbb{G}\mathcal{K}_1 \ \  0_{2n}
\big]$,\
${\Omega}_{10}=
\big[
0_{9n}\ \ \sqrt{\bar{\beta}}F \ \ 0_{11n}
\big]$,
$\Upsilon_1=P_2B_2\mathbb{G}\mathcal{K}_1$,
$\Upsilon_2=P_2B_2\mathcal{K}_2$,\
$\kappa_1=P_1R_1^{-1}P_1$,
$\kappa_2=P_1R_2^{-1}P_1$,
$\kappa_3=P_1R_3^{-1}P_1$,
$\kappa_4=P_2Z_2^{-1}P_2$.
}
Next, we discuss the dissipativity of the augmented system \eqref{p1-eq14} with non-zero disturbances.
For the given disturbance attenuation level $\gamma>0$. For this, we introduce the following performance index:
\begin{align}\label{p1-eq36}
\mathbb{J}=\int_{0}^{t}[-\bar{y}_1^T(s)\mathcal{Q}\bar{y}_1(s)&-2y_1^T(s)\mathcal{R}w(s)-w^T(s)\mathcal{S}w(s)
\nonumber\\
&+\gamma w^T(s)w(s)]ds.
\end{align}
Using output vector $y_1$ defined in \eqref{p1-eq14} and from \eqref{p1-eq35}, we get
\begin{align}\label{p1-eq37}
\dot{V}(t)+\mathbb{J}\leq \xi^T(t)\begin{bmatrix}
[{\Omega}]_{22\times 22} & \bar{\bar{\Omega}}_1\\
\ast & \bar{\bar{\Omega}}_2
\end{bmatrix}\xi(t)
\end{align}
where the elements of right hand side of \eqref{p1-eq37} are given as
{
$\xi(t)=
\begin{bmatrix}
\eta^T(t) & w^T
\end{bmatrix}$,
$\Omega_{1,22}=P_1B_1D_2-C_1^T\mathcal{S}$,
$\Omega_{14,22}=-P_2B_3$,
$\Omega_{22,22}=-\mathcal{R}-D_2^T\mathcal{S}+\gamma I$,\
$\bar{\bar{\Omega}}_1=
\big[
\zeta_2\bar{{\Omega}}_1^T \ \
d_2\bar{{\Omega}}_1^T \ \
\tau_2\bar{{\Omega}}_1^T \ \
\theta_2\bar{{\Omega}}_2^T \ \
\theta_2\bar{{\Omega}}_3^T \ \
\theta_2\bar{{\Omega}}_4^T \ \
\theta_2\bar{{\Omega}}_5^T \ \
\sqrt{\epsilon}\bar{{\Omega}}_6^T \ \
\sqrt{\epsilon}\sigma\bar{{\Omega}}_7^T \\ \
\sqrt{\epsilon}\delta\bar{{\Omega}}_8^T \ \
\sqrt{\epsilon}\sigma\delta\bar{{\Omega}}_9^T\ \ \
\bar{{\Omega}}_{10}^T \ \
\bar{{\Omega}}_{11}^T\big]$,\ \
$\bar{\bar{\Omega}}_2=\mbox{diag}\big\{-\bar{\kappa}_1, -\bar{\kappa}_2, -\bar{\kappa}_3, -\bar{\kappa}_4, -\bar{\kappa}_4, -\bar{\kappa}_4, -\bar{\kappa}_4, -I, -I, -I, -I, -I, -I\big\}$,\
$\bar{\Omega}_1=
\big[
{\Omega}_1 \ \ P_1B_1D_2
\big]$,\
$\bar{\Omega}_2=
\big[
{\Omega}_2 \ \  P_2B_3
\big]$,
$\bar{\Omega}_3=
\big[
{\Omega}_3 \ \  0
\big]$,\
$\bar{\Omega}_4=
\big[
{\Omega}_4 \ \  0
\big]$,
$\bar{\Omega}_5=
\big[
{\Omega}_5 \ \  0
\big]$,\
$\bar{\Omega}_6=
\big[
{\Omega}_6 \ \ 0
\big]$,\
$\bar{\Omega}_7=
\big[
{\Omega}_7 \ \ 0
\big]$,\
$\bar{\Omega}_8=
\big[
{\Omega}_8 \ \ 0
\big]$,\
$\bar{\Omega}_9=
\big[
{\Omega}_9 \ \ 0
\big]$,
$\bar{\Omega}_{10}=
\big[
{\Omega}_{10} \ \ 0
\big]$,
$\bar{\Omega}_{11}=
\big[
\sqrt{\mathcal{Q}}C_1X_1^T \ \ 0_{20n} \ \ \sqrt{\mathcal{Q}}D_1
\big]$,
$\bar{\kappa}_1=-2\epsilon_1P_1+\epsilon_1^2R_1$,
$\bar{\kappa}_2=-2\epsilon_2P_1+\epsilon_2^2R_2$,
$\bar{\kappa}_3=-2\epsilon_3P_1+\epsilon_3^2R_3$,
$\bar{\kappa}_4=-2\epsilon_1P_2+\epsilon_4^2Z_2$.}
In order to complete the proof, pre- and post- multiplying \eqref{p1-eq24} and \eqref{p1-eq37} by
{$\mbox{diag}\{X_1, X_1, X_1, X_1\}$} and {$\mbox{diag}\{\underbrace{X_1, \cdots, X_1}_{13}, \underbrace{X_2, \cdots, X_2}_{5}, X_1, X_1, I, I, X_1, X_1, X_1, X_2, X_2, X_2, X_2, \\ \underbrace{I, I, \cdots, I}_{6}\}$}, respectively.
Letting
{$P_1=X_1^{-1}$, $P_2=X_2^{-1}$
$\hat{Q}_i=X_1Q_iX_1,\ (i=1, 2, 3)$, $\hat{Q}_4=X_2{Q}_4X_2$, $\hat{\tilde{Q}}_4=X_2\tilde{Q}_4X_2$, $\hat{R}_j=X_1R_jX_1,\ (j=1, 2, 3)$,
$\hat{Z}_1=X_2Z_1X_2$, $\hat{Z}_2=X_2Z_2X_2$, $\hat{\mathbb{U}}_{i1}=X_1\mathbb{U}_{i1}X_1$, $\hat{\mathbb{U}}_{i2}=X_1\mathbb{U}_{i2}X_1$, $\hat{\mathbb{U}}_{i3}=X_1\mathbb{U}_{i3}X_1$, $\hat{\mathbb{U}}_{i4}=X_1\mathbb{U}_{i4}X_1$ $(i=1, 2, 3)$, $\mathbb{U}_{1}=M$, $\mathbb{U}_{2}=N$, $\mathbb{U}_{3}=S$.}
We can get LMIs \eqref{p1-eq24} and \eqref{p1-eq37} are equivalent to the LMIs \eqref{p1-thm1lmi1} and \eqref{p1-thm1lmi2},
respectively. This implies that the closed-loop singular NCCSs \eqref{p1-eq14} is mean-square asymptotically stable.\

Next, we will prove the regularity and the impulse-free condition for the system \eqref{p1-eq14}. Here, we assume that the matrix $\mathcal{E}$ and the state vector $\bar{x}_2$ in the following forms:
\begin{align*}
\mathcal{E}=
\begin{bmatrix}
I_r & 0_{r\times (n-r)}\\
0_{(n-r)\times r} & 0_{(n-r)}
\end{bmatrix}\quad \mbox{and} \quad
\bar{x}_2=
\begin{bmatrix}
\bar{x}_{21} \\ \bar{x}_{22}
\end{bmatrix}
\end{align*}
where $\bar{x}_{21}\in\mathbb{R}^r$ and $\bar{x}_{22}\in \mathbb{R}^{(n-r)}$.\\
It follows from \eqref{p1-eq35} that
\begin{align}\label{p1-proofimp}
\Omega_{14,14}=Q_4+\tilde{Q}_4+2P_2A_2.
\end{align}
Next, we define
{\begin{align*}
P_2&=
\begin{bmatrix}
P_{21} & P_{22}\\P_{23} & P_{24}
\end{bmatrix},\quad
A_{2}=
\begin{bmatrix}
A_{21} & A_{22}\\ A_{23} & A_{24}
\end{bmatrix}, \\
Q_4&=
\begin{bmatrix}
Q_{41} & Q_{42}\\Q_{43} & Q_{44}
\end{bmatrix},\quad
\tilde{Q}_4=
\begin{bmatrix}
\tilde{Q}_{41} & \tilde{Q}_{42}\\ \tilde{Q}_{43} & \tilde{Q}_{44}
\end{bmatrix}.
\end{align*}}
By substituting $P_2$, $A_2$, $Q_4$ and $\tilde{Q}_4$ into \eqref{p1-proofimp}, it yields that
\begin{eqnarray*}
(P_{24}A_{24}+A_{24}^TP_{24}^T)+Q_{44}+ \tilde{Q}_{44}<0.
\end{eqnarray*}
Then, it is easy to find
$
\mbox{det}(s\hat{\mathcal{E}}-\hat{A}_2)=\mbox{det}(s\mathcal{E}-A_2)
$
which tends to $\mbox{det}(s\mathcal{E}-A_2)$ is not identically zero and $\mbox{det}(s\mathcal{E}-A_2)=r=\mbox{rank}(\mathcal{E}).$ Thus, the system \eqref{p1-eq14} is  regular and impulse free. Hence by Definition \ref{p1-def1}, the systems \eqref{p1-eq14} is mean-square asymptotically admissible. \
\end{proof}

\subsection{Theorem~\ref{p1-thm2}}\label{sec:app:3}
{$\tilde{B}=
\big[
\tilde{\epsilon}_1\tilde{B}_1^T \ \ \tilde{Y}_1^T \ \
\tilde{\epsilon}_2\tilde{B}_2^T \ \ \tilde{Y}_2^T \ \
\tilde{\epsilon}_3\tilde{B}_3^T \ \ \tilde{Y}_3^T \ \
\tilde{\epsilon}_4\tilde{B}_4^T \ \ \tilde{Y}_4^T \ \
\tilde{\epsilon}_5\tilde{B}_5^T \ \ \tilde{Y}_5^T \\
\tilde{\epsilon}_6\tilde{B}_6^T \ \ \tilde{Y}_6^T \ \
\tilde{\epsilon}_7\tilde{B}_7^T \ \ \tilde{Y}_7^T \ \
\tilde{\epsilon}_8\tilde{B}_8^T \ \ \tilde{Y}_8^T
\big]$,
$\tilde{\epsilon}=\mbox{diag}
\big\{
-\tilde{\epsilon}_1 \ \ -\tilde{\epsilon}_1 \ \
-\tilde{\epsilon}_2 \ \ -\tilde{\epsilon}_2 \ \
-\tilde{\epsilon}_3 \ \ -\tilde{\epsilon}_3 \ \
-\tilde{\epsilon}_4 \ \ -\tilde{\epsilon}_4 \ \
-\tilde{\epsilon}_5 \ \ -\tilde{\epsilon}_5 \ \ \
-\tilde{\epsilon}_6 \ \ -\tilde{\epsilon}_6 \ \
-\tilde{\epsilon}_7 \ \ -\tilde{\epsilon}_7 \ \
-\tilde{\epsilon}_8 \ \ -\tilde{\epsilon}_8
\big\}$,
$\tilde{B}_1=
\big[
0 \ \ B_2 \ \ 0_{33n}
\big]$,
$\tilde{B}_2=
\big[
0_{5n} \ \ B_2 \ \ 0_{29n}
\big]$,
$\tilde{B}_3=
\big[
0_{18n} \ \ B_2 \ \ 0_{16}
\big]$,
$\tilde{B}_4=
\big[
0_{19n} \ \ B_2 \ \ 0_{15}
\big]$,
{$\tilde{B}_5=
\big[
0_{29n} \ \ \bar{\beta}_1I \ \ \bar{\beta}_1I
\ \ -\bar{\alpha}_1I \ \ -I \ \ 0_{2n}
\big]$,
$\tilde{B}_6=
\big[
0_{29n} \ \ \bar{\beta}_1\bar{\alpha}_1I\ \ -\bar{\beta}_1I
\ \ -\bar{\alpha}_1I \ \ I \ \ 0_{2n}
\big]$,\
$\tilde{B}_7=
\big[
0_{29n} \ \ \bar{\beta}_1\bar{\alpha}_1I \ \ -\bar{\beta}_1I
\ \ -\bar{\alpha}_1I \ \ I \ \ 0_{2n}
\big]$,\
$\tilde{B}_8=
\big[
0_{29n} \ \ \bar{\beta}_1I \ \ 0
\ \ I  \ \ 0_{3n}
\big]$,}\
$\tilde{Y}_1=
\big[
0_{25n} \ \ \bar{\alpha}\bar{\beta}_1\mathbb{G}_1Y_1 \ \ \sigma\bar{\beta}_1\mathbb{G}_1Y_1
\ \ -\delta\bar{\alpha}_1\mathbb{G}_1Y_1 \ \ \sigma\delta \mathbb{G}_1Y_1 \ \ 0_{5n}
\big]$,\
$\tilde{Y}_2=
\big[
0_{25n} \ \ \bar{\alpha}_1\bar{\beta}_1\mathbb{G}_1Y_1 \ \ \sigma\bar{\beta}_1\mathbb{G}_1Y_1
\ \ -\delta\bar{\alpha}_1\mathbb{G}_1Y_1 \ \ -\sigma\delta \mathbb{G}_1Y_1 \ \ 0_{5n}
\big]$,\
$\tilde{Y}_3=
\big[
0_{25n} \ \ -\bar{\alpha}_1\bar{\beta}_1\mathbb{G}_1Y_1 \ \ \sigma\bar{\beta}_1\mathbb{G}_1Y_1
\ \ -\delta\bar{\alpha}_1\mathbb{G}_1Y_1 \ \ -\sigma\delta \mathbb{G}_1Y_1 \ \ 0_{5n}
\big]$,\
$\tilde{Y}_4=
\big[
0_{25n} \ \ \bar{\beta}\mathbb{G}_1Y_1 \ \ 0 \ \  \delta\bar{\alpha}_1\mathbb{G}_1Y_1 \ \  0_{6n}
\big]$,
{$\tilde{Y}_5=
\big[
0 \ \ \mathbb{G}_1Y_1 \ \ 0_{33n}
\big]$,
$\tilde{Y}_6=
\big[
0_{5n} \ \ \mathbb{G}_1Y_1 \ \ 0_{29n}
\big]$,\
$\tilde{Y}_7=
\big[
0_{18n} \ \ \mathbb{G}_1Y_1 \ \ 0_{16}
\big]$,
$\tilde{Y}_8=
\big[
0_{19n} \ \ \mathbb{G}_1Y_1 \ \ 0_{15}
\big]$.}

\addtolength{\textheight}{-12cm}   




\end{document}